\newcommand{\Q}{\mathbb{Q}}
\newcommand{\C}{\mathbb{C}}
\newcommand{\Z}{\mathbb{Z}}
\newcommand{\h}{\mathrm{H}} % cohomology
\newcommand{\iHom}{{\mathcal{H}om}}
\renewcommand{\O}{\mathcal{O}}
\renewcommand{\k}{\textit{\textbf{k}}}
\DeclareMathOperator{\Hom}{Hom}
\DeclareMathOperator{\id}{id}
\DeclareMathOperator{\Spec}{Spec}
\DeclareMathOperator{\Pic}{Pic}
\DeclareMathOperator{\gr}{gr}
\DeclareMathOperator{\Lie}{Lie}
\DeclareMathOperator{\Ext}{Ext}
\DeclareMathOperator{\Ker}{Ker}
\DeclareMathOperator{\Coker}{Coker}
\newcommand{\x}{\times}
\newcommand{\ox}{\otimes}
\newcommand{\iso}{\cong}
\newcommand{\et}{{\rm et}}
\newcommand{\inv}{^{-1}}
\newcommand{\dual}{^{\vee}}
\newcommand{\dfni}[1]{\emph{#1}\index{#1}}
\newtheorem{thr}{Theorem}[section]
\newtheorem{lmm}[thr]{Lemma}
\newtheorem{prp}[thr]{Proposition}
\newtheorem{crl}[thr]{Corollary}
\theoremstyle{definition}\newtheorem{dfn}[thr]{Definition}
\theoremstyle{remark}\newtheorem{rmk}[thr]{Remark}
\theoremstyle{remark}\newtheorem{exm}[thr]{Example}
\title{Extensions of Formal Hodge Structures}
\author{Nicola Mazzari}
\begin{document}
\maketitle
\begin{abstract}
	We define and study the properties of the category ${\sf FHS}_n$ of formal Hodge structure of level $\le n$ following the ideas of L. Barbieri-Viale who discussed the case of level $\le 1$. As an application we describe the generalized Albanese variety of Esnault, Srinivas and Viehweg via the group $\Ext^1$ in ${\sf FHS}_n$. This formula generalizes the classical one to the case of proper but non necessarily smooth complex varieties. 
\end{abstract}
	\section*{Introduction}
	The aim of this work is to develop the program proposed by S. Bloch, L. Barbieri-Viale and V. Srinivas (\cite{bloch.srinivas:ehs},\cite{bv:fht}) of generalizing Deligne mixed Hodge structures providing a new cohomology theory for 
	complex algebraic varieties. In other words to construct and study cohomological invariants of (proper) algebraic schemes over $\C$ which are finer than the associated mixed Hodge structures in the case of 
	singular spaces.
	For any natural number $n>0$ (the level) we construct an abelian category, ${\sf FHS}_n$, and a family of functors 
	\[
		\h^{n,k}_\sharp:({\sf Sch/\C})^\circ\rightarrow {\sf FHS}_n\qquad 1\le k\le n
	\]
	such that 
	\begin{enumerate}
		\item The category ${\sf MHS}_n$ of mixed Hodge structure of level $\le n$ is a full sub-category of ${\sf FHS}_n$.
		\item There is a forgetful functor $f:{\sf FHS}_n\to {\sf MHS}_n$ s.t. $f(\h^{n,k}_\sharp(X))=H^n(X)$ (functorially in $X$) is the usual mixed Hodge structure on the Betti cohomology of $X$, i.e. $\h^n(X):=\h^n(X_{\rm an}, \Z)$.
	\end{enumerate}

	Roughly speaking the sharp cohomology objects $\h^{n,k}_\sharp(X)$ consist of the singular cohomology groups $\h^{n}(X_{\rm an},\Z)$, with their mixed Hodge structure, plus some extra structure. We  remark that $\h^{n,k}_\sharp(X)$ is completely determined by the mixed Hodge structure on $\h^n(X)$ when $X$ is proper and smooth;  the extra structure shows up only when $X$ is not proper or singular.

	The motivating example is the following. Let $X$ be a proper algebraic scheme over $\C$. Denote $\h^i(X):=\h^i(X_{\rm an},\Z)$, $\h^i(X)_\C:=\h^i(X)\ox \C$ and let $\h^{i,j}_{\rm dR}(X):=\h^i(X_{\rm an},\Omega^{<j})$ be the truncated analytic De Rham cohomology of $X$.  Then there is a commutative diagram
	\begin{equation*}
	\xymatrix{
	\h^i(X)\ar[r]\ar[dr]& \h^i(X)_\C/F^i\ar[r]&\h^i(X)_\C/F^{i-1}\ar[r]&\cdots\ar[r]&\h^i(X)_\C/F^1\\
	&\h^{i,i}_{\rm dR}(X)\ar[u]^{\pi_i}\ar[r]&\h^{i,i-1}_{\rm dR}(X)\ar[u]^{\pi_{i-1}}\ar[r] &\cdots\ar[r]&\h^{i,1}_{\rm dR}(X)\ar[u]^{\pi_1}
	 }
	\end{equation*}
	where the $\C$-linear maps $\pi_j$ are surjective. This diagram is the  formal Hodge structure $\h^{i,i}_\sharp(X)$ (or simply  $\h^{i}_\sharp(X)$).\\
	Note that this definition is compatible with the theory of formal Hodge structures of level $\le 1$ developed by L. Barbieri-Viale (See \cite{bv:fht}). He defined $\h^1_\sharp(X)$ as the generalized Hodge realization of $\Pic^0(X)$, i.e. $\h^1_\sharp(X):=T_{\oint}(\Pic^0(X))$ which is explicitly represented  by the diagram
	\begin{equation*}
	\xymatrix{
\h^1(X)\ar[r]\ar[dr]	& \h^1(X)_\C/F^1\\
	&\h^{1,1}_{\rm dR}(X)\ar[u]^{\pi_1}
	 }	
	\end{equation*}

 As an application of this theory  we can express the Albanese variety of Esnault, Srinivas and Viehweg (\cite{esnault.srinivas.viehweg}) using ext-groups. Precisely let $X$ be a proper, irreducible, algebraic scheme over $\C$. Let $d=\dim X$ and denote by $\h^{2d-1,d}_\sharp(X)$ the formal Hodge structure represented by the following diagram
	\begin{equation*}
	\xymatrix{
	\h^{2d-1}(X)\ar[dr]^h\ar[r]& \h^{2d-1}(X)_\C/F^d\ar[r]&\cdots  \h^{2d-1}(X)_\C/F^1\\
	 & \h^{2d-1,d}_{\rm dR}(X)\ar[u]\ar[r]&\cdots \h^{2d-1,1}_{\rm dR}(X)\ar[u] \ .}
	\end{equation*}
	Then there is an isomorphism of complex Lie groups
	\[
		{\rm ESV}(X)_{\rm an}\iso \Ext^1_{{\sf FHS}_d}(\Z(-d),\h^{2d-1,d}_\sharp(X))
	\]
	where ${\rm ESV}(X)$ is the generalized Albanese of \cite{esnault.srinivas.viehweg}. Note that this formula generalizes the classical one 
	\begin{equation*}
		{\rm Alb}(X)_{\rm an}\iso \Ext^1_{\sf MHS}(\Z(-d),\h^{2d-1}(X)) 
	\end{equation*}
	which follows from the work of Carlson (See \cite{carlson}).
\subsection*{Acknowledgments}
The author would like to thank L. Barbieri-Viale for 
pointing his attention to this subject and  for helpful discussions. The author also thanks A. Bertapelle for many useful comments and suggestions.
\tableofcontents
\section{Formal Hodge Structures}
We simply call a \emph{formal group} a commutative group  of the form $H=H^o\x H_{\rm et}$ where $H_{\rm et}$ is a finitely generated abelian group and $H^o$ is a finite dimensional $\C$-vector space. We denote by $\sf FrmGrp$ the category with objects formal groups and \emph{morphisms} $f=(f^o,f_\et):H\to H'$, where  $f^o:H^o\to H'^o$ is $\C$-linear and $f_\et:H_\et \to H_\et'$ is $\Z$-linear.

We denote the category of mixed Hodge structures of level $\le l$  (i.e. of type $\{(n,m) |\ 0\le n,m \le l\}$) by ${\sf MHS}_l={\sf MHS}_l(0)$, for $l\ge 0$. Also we define the category ${\sf MHS}_l(n)$ to be the full sub-category of $\sf MHS$ whose objects are $H_\et\in \sf MHS$ such that $H_\et\ox \Z(-n)$ is in ${\sf MHS}_l(0)$.\\
Let ${\sf Vec}={\sf Vec}_1$ be the category of finite dimensional complex vector spaces and $n>0$ be an integer. We define the category  ${\sf Vec}_n$, as follows. The objects are   diagrams of $n-1$  composable arrows of $\sf{Vec}$ denoted by
\[
	V:\  V_n \stackrel{v_n}{\longrightarrow} V_{n-1}\stackrel{v_{n-1}}{\longrightarrow}V_{n-2}\to\cdots\to V_1 \ .
\]

Let $V$, $V'\in {\sf Vec}_n$,  a \emph{morphism}  $f:V\to V'$ is a family $f_{i}:V_{i}\to V_{i}'$ of $\C$-linear maps such that\\
	\makebox[\textwidth][c]{
	\xymatrix{
	V_{i+1} \ar[d]^{f_{i+1}}\ar[r]&    V_{i}\ar[d]^{f_{i}}\\
	V_{i+1}'\ar[r]&V_{i}'  
	}}
	is commutative for all $1\le i\le n$.
\begin{dfn}[level $=0$]
	We define the category of \dfni{formal Hodge structures of level $0$} (twisted by $k$), ${\sf FHS}_0(k)$ as follows: the objects are formal groups $H$ such that $H_\et$ is a pure Hodge structure of type $(-k,-k)$; morphism are maps of formal groups.
	
	Equivalently ${\sf FHS}_0(k)$ is the product category ${\sf MHS}_0(k)\x \sf Vec$.
\end{dfn}
\begin{dfn}[level $\le n$]\label{def:fhsn}
	Fix $n>0$ an integer. We define a \dfni{formal Hodge structure of level $\le n$} (or a \emph{$n$-formal Hodge structure}) to be the data of
	
	i) A   formal group $H$ (over $\C$) carrying a mixed Hodge structure on the \'etale component, $(H_\et,F,W)$, of level $\le n$. Hence we get  $F^{n+1}H_\C=0$ and $F^{0}H_\C=H_\C$, where $H_\C:=H_\et\ox \C$. 
	
	ii) A family of fin. gen. $\C$-vector spaces $V_i$, for $1\le i\le n$.
	
	iii) A commutative diagram of abelian groups
	\begin{equation*}
	\xymatrix{
	H_\et\ar[dr]^{h_\et}\ar[r]^c&H_\C/F^{n}\ar[r]&H_\C/F^{n-1}\ar[r]&\cdots\ar[r]&H_\C/F^1\\
H^o\ar[r]_{h^o}&V_{n}\ar[u]^{\pi_{n}}\ar[r]_{v_{n}}&V_{n-1}\ar[u]^{\pi_{n-1}}\ar[r]_{v_{n-1}}&\cdots\ar[r]&V_1\ar[u]^{\pi_1}  }
	\end{equation*}
	such that $\pi_i$, $h^o$ are $\C$-linear maps.\\[1ex]
	We denote this object by $(H,V)$ or $(H,V,h,\pi)$. Note that $V=\{V_n\to\cdots\to V_1\}$ can be viewed as an object of ${\sf Vec}_n$.\\ 
	The map $h=(h_\et,h^o):H\to V_n$ is called \emph{augmentation} of the given formal Hodge structure.	\\[1ex]
	A \emph{morphism} of $n$-formal Hodge structures is a pair $(f,\phi)$ such that: $f:H\to H'$ is a morphism of formal groups; $f$ induces a morphism of mixed Hodge structures $f_\et$; $\phi_i:V_i\to V_i'$ is a family of $\C$-linear maps; $\phi:V\to V'$ is a morphism in ${\sf Vec}_n$; $(f,\phi)$ are  compatible with the above structure, i.e. such that the following diagram  commutes
	\begin{equation*}
	\xymatrix{
	& &  H_\et'\ar[dr]^{h_\et'}\ar[r]&H_\C'/F\\
 H_\et\ar[dr]^{h_\et}\ar[r]\ar[rru]^{f_\et}&H_\C'/F\ar[rru]^{\bar{f}_\C}&(H')^o\ar[r]_{(h')^o}&V'\ar[u]_{\pi'}\\
	 H^o\ar[r]_{h^o}\ar[rru]^{ \qquad f^o}&V\ar[u]^{\pi}\ar[rru]_{\phi} 
	}
	\end{equation*}
	
	We denote this category by ${\sf FHS}_{n}={\sf FHS}_n(0)$.
\end{dfn}
\begin{rmk}
	Note that the commutativity of the diagram (iii) of the above definitions implies that the maps $\pi_i$ are surjective. In fact after tensor by $\C$ we get that the composition $\pi_n\circ h_\C$ is the canonical projection $H_\C\to H_\C/F^n$: hence $\pi_n$ is surjective. Similarly we obtain the surjectivity of $\pi_i$ for all $i$.   
\end{rmk}
\begin{exm}[Sharp cohomology of a curve]
	Let $U=X\setminus D$ be a complex projective curve minus a finite number of points. Then we get the following commutative diagram
	 \begin{equation*}
	\xymatrix{
	\h^1(U)\ar[dr]^{}\ar[r]&\h^1(U)_\C/F^1\\
\Ker(\h^{1,1}_{\rm dR}(X)\to \h^{1,1}_{\rm dR}(U))\ar[r]_{}&\h^{1,1}_{\rm dR}(X)\ar[u]^{\pi_1}  }
	\end{equation*}
	representing a formal Hodge structure of level $\le 1$.
\end{exm}
\begin{rmk}[Twisted fhs]
	In a similar way one can define the category ${\sf FHS}_n(k)$ whose object are represented by diagrams
	\begin{equation*}
	\xymatrix{
	H_\et\ar[dr]^{h_\Z}\ar[r]&H_\C/F^{n-k}\ar[r]&H_\C/F^{n-1-k}\ar[r]&\cdots\ar[r]&H_\C/F^{1-k}\\
H^o\ar[r]_{h^o}&V_{n-k}\ar[u]^{\pi_{n-k}}\ar[r]_{v_{n-k}}&V_{n-k-1}\ar[u]^{\pi_{n-k-1}}\ar[r]_{v_{n-k-1}}&\cdots\ar[r]&V_{1-k}\ar[u]^{\pi_{1-k}}  }
	\end{equation*}
	where $H_\et$ is an object of ${\sf MHS}_n(k)$.
	
	Hence the Tate twist $H_\et\mapsto H_\et\ox\Z(k)$ induces an equivalence of categories
	\[
	{	{\sf FHS}_n(0)\rightarrow {\sf FHS}_n(k)}\qquad (H,V)\mapsto (H(k),V(k))
	\]
	where $H(k)=H_\et\ox \Z(k)\x H^o$ and $V(k)$ is obtained by $V$ shifting the degrees, i.e. $V(k)_i=V_{i+k}$, for $1-k\le i \le n-k$.
\end{rmk}
\begin{exm}[Level $\le 1$]
	 According to the above definition a $1$-formal Hodge structure twisted by $1$ is represented by a diagram 
		\begin{equation*}
		\xymatrix{
	H_\et\ar[dr]^{h_\et}\ar[r]&H_\C/F^{0}\\
H^o\ar[r]_{h^o}&V_{0}\ar[u]^{\pi_{0}} }
		\end{equation*}
	where is $(H_\et,F,W)$ be a mixed Hodge structure of level $\le 1$ (twisted by $\Z(1)$), i.e. of type $[-1,0]\x [-1,0] \subset \Z^2$ (recall that this implies $F^1 H_\C=0$ and $F^{-1} H_\C=H_\C $). If we further assume that $H_\et$ carries a mixed Hodge structure such that $\gr^W_{-1}H_\et$ is polarized we get the category studied in \cite{bv:fht}.
\end{exm}
\begin{prp}[Properties of FHS]\label{prp:fhsproperties}
	i) The category ${\sf FHS}_n$ is an abelian category.
	
	ii) The forgetful functor $(H,V)\mapsto H$ (resp. $(H,V)\mapsto V$) is an exact functor with values in the category of formal groups (resp. the category ${\sf Vec}_n$).
	
	iii) There exists a full and thick embedding ${\sf MHS}_l(0)\to {\sf FHS}_l(0)$ induced by $(H_\et,F,W)\mapsto (H_\et,V_i=H_\C/F^i)$.
	
	iv) There exists a full and thick embedding ${\sf Vec}_l(0)\to {\sf FHS}_l(0)$ induced by $V\mapsto (0,V)$.
\end{prp}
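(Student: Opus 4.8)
The plan is to verify the four items essentially by reducing everything to the corresponding statements for the component categories $\sf FrmGrp$, $\sf MHS$, and $\sf Vec_n$, which are already known to be abelian (resp.\ exact). The key observation is that an object $(H,V,h,\pi)$ of ${\sf FHS}_n$ is a diagram in the category of abelian groups that is ``glued'' from a formal group $H$, a mixed Hodge structure on $H_\et$, and an object $V$ of $\sf Vec_n$, subject to the commutativity constraint relating $h$, $c$, $\pi$, and the $v_i$. So ${\sf FHS}_n$ can be realized as a full subcategory of a suitable category of functors (diagrams) valued in a product of abelian categories; the strategy is to show it is closed under the relevant constructions.

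First I would make precise the kernel and cokernel construction. Given a morphism $(f,\phi):(H,V)\to(H',V')$, I would set $\Ker(f,\phi)=(\Ker f, \Ker\phi, \ldots)$ with the induced augmentation and projection maps, and dually for cokernels, where $\Ker f$ and $\Coker f$ are taken in $\sf FrmGrp$ (note $\sf FrmGrp$ is abelian since it is a product of the abelian categories of finite-dimensional $\C$-vector spaces and finitely generated abelian groups) and $\Ker\phi$, $\Coker\phi$ are taken in $\sf Vec_n$ (abelian, being a category of diagrams in $\sf Vec$). One must check that the induced $h_\et$ on these kernels/cokernels remains a morphism of mixed Hodge structures — this is where the hypothesis that $f$ induces a morphism of MHS is used, together with the fact that $\sf MHS$ is abelian and strict for $F$ and $W$, so that $\Ker f_\et$ and $\Coker f_\et$ computed in abelian groups carry the subquotient Hodge structures of level $\le n$. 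Then one checks that the maps $\pi_i$ for the kernel object remain surjective: for the kernel this follows because $\pi_n\circ h_\C$ is still the canonical projection onto the relevant quotient (the argument in the Remark after Definition~\ref{def:fhsn} applies verbatim to any object satisfying (iii)), and dually for cokernels. The universal properties of these kernels and cokernels in ${\sf FHS}_n$ then follow componentwise from the universal properties in $\sf FrmGrp$, $\sf MHS$ and $\sf Vec_n$.

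With kernels and cokernels in hand, the remaining axiom for (i) is that the canonical map $\Coim(f,\phi)\to\Im(f,\phi)$ is an isomorphism. Since $\Coim$ and $\Im$ are built out of these kernels and cokernels, and the forgetful functors to $\sf FrmGrp$ and to $\sf Vec_n$ commute with the constructions just described, the canonical map is an isomorphism after applying both forgetful functors (because those target categories are abelian); but a morphism in ${\sf FHS}_n$ whose images under $(H,V)\mapsto H$ and $(H,V)\mapsto V$ are both isomorphisms is itself an isomorphism (its inverse is defined componentwise and is automatically compatible with the diagram). This simultaneously proves (ii): the two forgetful functors send the kernels and cokernels constructed above to kernels and cokernels in $\sf FrmGrp$ and $\sf Vec_n$, hence are exact. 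For (iii), the functor $(H_\et,F,W)\mapsto(H_\et, V_i = H_\C/F^i, \pi_i=\id)$ — with $H^o=0$ — is visibly full and faithful; one checks thickness (closure under extensions and subquotients in ${\sf FHS}_n$) by noting that an object $(H,V)$ is in the essential image iff $H^o=0$, $V_i\to H_\C/F^i$ is an isomorphism for all $i$, and $h_\et=c$, and these conditions are preserved under the subquotients and extensions computed as above using strictness of MHS morphisms. For (iv), the functor $V\mapsto(0,V)$ with zero formal group is full and faithful and its essential image consists of those $(H,V)$ with $H=0$, which is patently closed under subobjects, quotients and extensions in ${\sf FHS}_n$.

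The main obstacle I anticipate is purely bookkeeping: verifying that the induced morphism $h_\et$ (and the whole commutative diagram (iii)) genuinely survives on kernels and cokernels, i.e.\ that no compatibility is lost when passing to subquotients. The crux is the strictness of morphisms of mixed Hodge structures with respect to $F$ (Deligne), which guarantees $(\Ker f_\et)_\C/F^i = \Ker(H_{\et,\C}/F^i \to H'_{\et,\C}/F^i)$ and the dual statement for cokernels; granting this, every square in the diagram (iii) for the kernel and cokernel objects commutes by functoriality, and surjectivity of the $\pi_i$ is automatic. Everything else is a diagram chase that reduces to the already-established abelianness of the three building-block categories, so I do not expect genuine difficulty there.
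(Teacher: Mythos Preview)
Your proposal is correct and follows essentially the same approach as the paper: compute kernels, cokernels, and direct sums componentwise in ${\sf FrmGrp}\times{\sf Vec}_n$ (with the MHS on $H_\et$), and deduce (ii)--(iv) from this. The paper's own proof is much terser, but your expanded treatment---particularly the explicit identification of strictness of MHS morphisms as the reason $(\Ker f_\et)_\C/F^i$ agrees with $\Ker(H_\C/F^i\to H'_\C/F^i)$---is exactly the content hiding behind the paper's one-line justifications.
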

\begin{proof}
	i) It follows from the fact that we can compute kernels, co-kernels and direct sum component-wise. 

ii) It follows by (i).

iii) Let $(f,\phi):(H_\et,H_\C/F)\to (H_\et',H_\C'/F)$ be a morphism in $\sf FHS_n$. Then by definition for any $1\le i\le n$ there is a commutative diagram
\begin{equation*}
\xymatrix{
 H_\C/F^i\ar[d]_{\id}\ar[r]^{\phi_i}&  H_\C'/F^i  \ar[d]^{\id}\\
H_\C/F^i\ar[r]_{\bar{f}_i} &H_\C'/F^i   }
\end{equation*}
where $\bar{f}_i(x + F^iH_\C)=f(x)+F^iH_\C'$ is the map induce by $f$: it is well defined because the morphisms of mixed Hodge structures are  strictly compatible w.r.t. the Hodge filtration. Hence $\phi$ is completely determined by $f$.

iv) It is a direct consequence of the definition of ${\sf FHS}_n$. 
\end{proof}
\begin{lmm}
	Fix $n\in \Z$. The following functor
	\[
		{{\sf MHS} \rightarrow  \sf Vec}\ ,\quad (H_\et,W,F)\mapsto H_\C/F^n
	\]
	is an exact functor.
\end{lmm}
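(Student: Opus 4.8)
The plan is to deduce exactness from two standard facts: that $\C$ is flat over $\Z$, and Deligne's theorem --- already invoked in the proof of Proposition~\ref{prp:fhsproperties} --- that a morphism of mixed Hodge structures is strictly compatible with the Hodge filtration $F$. The functor is plainly additive, since $(H\oplus H')_\C/F^n\cong H_\C/F^n\oplus H'_\C/F^n$, so it is enough to check that it sends a short exact sequence $0\to H'\xrightarrow{f}H\xrightarrow{g}H''\to 0$ in $\sf MHS$ to a short exact sequence in $\sf Vec$.

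First I would pass to underlying complex vector spaces. The forgetful functor from $\sf MHS$ to finitely generated $\Z$-modules is exact, and $\C$ is flat over $\Z$ (being a $\Q$-vector space), so $0\to H'_\C\to H_\C\to H''_\C\to 0$ is an exact sequence of finite-dimensional complex vector spaces. Next I would record how the Hodge filtrations sit: in the abelian category $\sf MHS$ the subobject $H'=\Ker(g)$ carries the induced filtration and the quotient $H''=\Coker(f)$ carries the quotient filtration, so that $F^nH'_\C=H'_\C\cap F^nH_\C$ and $F^nH''_\C=g_\C(F^nH_\C)$. This description is exactly the content of strictness of $f$ and $g$; it is the only non-formal ingredient, and I expect the main (really, the only) subtlety to be invoking it correctly --- everything afterwards is elementary linear algebra.

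Finally I would run the diagram chase on $0\to H'_\C/F^n\to H_\C/F^n\to H''_\C/F^n\to 0$. Injectivity on the left: if $a\in H'_\C$ maps into $F^nH_\C$ then $a\in H'_\C\cap F^nH_\C=F^nH'_\C$, so its class vanishes. Surjectivity on the right: $g_\C$ is onto and $F^n$ downstairs is a quotient filtration. Exactness in the middle: a class $b+F^nH_\C$ dies in $H''_\C/F^n$ iff $g_\C(b)\in g_\C(F^nH_\C)$, iff $b-b'\in\Ker(g_\C)=\img(f_\C)$ for some $b'\in F^nH_\C$, iff $b\in\img(f_\C)+F^nH_\C$ --- which is precisely the image of $H'_\C/F^n$. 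Hence the sequence is short exact, and since each $H_\C/F^n$ is finite-dimensional it lives in $\sf Vec$; therefore the functor is exact.
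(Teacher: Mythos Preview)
Your proof is correct and is essentially an unpacking of the paper's own proof, which consists of the single citation ``This follows from \cite[\S 1.2.10]{deligne:hodge2}'' --- that reference is precisely Deligne's strictness theorem (equivalently, exactness of the functors $F^p$ and $\gr_F^p$ on $\sf MHS$). You invoke the same key ingredient and then carry out explicitly the elementary linear-algebra verification that the paper leaves implicit in the citation.
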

\begin{proof}
	This follows from \cite[\S 1.2.10]{deligne:hodge2}.
\end{proof}

\subsection{Sub-categories of ${\sf FHS}_n$}
Let $(H,V)$ be a formal Hodge structure of level $\le n$. It can be visualized as a diagram
	\begin{equation*}
	\xymatrix{
	H_\et\ar[dr]^{h_\et}\ar[r]&H_\C/F^{n}\ar[r]&H_\C/F^{n-1}\ar[r]&\cdots\ar[r]&H_\C/F^1\\
H^o\ar[r]_{h^o}&V_{n}\ar[u]^{\pi_{n}}\ar[r]_{v_{n}}&V_{n-1}\ar[u]^{\pi_{n-1}}\ar[r]_{v_{n-1}}&\cdots\ar[r]&V_1\ar[u]^{\pi_1} \\
& V_n^o\ar[u]\ar[r]& V_{n-1}^o\ar[u]\ar[r]&\cdots\ar[r]&V_{1}^o\ar[u] }
	\end{equation*}
	where $V^o_i:=\Ker (\pi_i:V_i\to H_\C/F^i)$. We can consider the following $n$-formal Hodge structures
\begin{enumerate}
	\item $(H,V)_\et:=(H_\et,V/V^o)$, called the \emph{\'etale part} of $(H,V)$.
	\item $(H,V)_\x:=(H,V/V^o)$, where the augmentation $H\to H_\C/F^{n}=V_n/V_n^o$ is the composite $\pi_{n}\circ h$.
\end{enumerate}
	We say that $(H,V)$ is \emph{\'etale} (resp. \emph{connected}) if $(H,V)=(H,V)_\et$ (resp. $(H,V)_\et=0$).	Also we say that $(H,V)$ is \emph{special} if $h^o:H^o\to V_n$ factors through $V^o_n$. We will denote by ${\sf FHS}_{n,\et}$ (resp. ${\sf FHS}_{n}^o$, ${\sf FHS}_{n}^s$) the full sub-category of ${\sf FHS}_{n}$ whose objects are \'etale (resp. connected, special). Note that by construction the category of \'etale formal Hodge structure ${\sf FHS}_{n,\et}$ is equivalent to ${\sf MHS}_n$, by abuse of notation we will identify these two categories.
\begin{prp}[Canonical Decomposition]\label{prp:fhssplittings}
	i) Let $(H,V)\in {\sf FHS}_n$ ($n>0$), then there are two canonical exact sequences
	\[
		0\to (0,V^o)\to (H,V)\to (H,V)_\x\to 0\quad; 0\to (H,V)_\et\to (H,V)_\x\to (H^o,0)\to 0
	\]
	ii) The augmentation $h^o:H^o\to V_{n}$ factors trough $V_{n}^o$ $\iff$ there is a canonical exact sequence
	\[
		0\to (H,V)^o\to (H,V)\to (H,V)_\et\to 0 
	\]
	where $(H,V)^o:=(H^o,V^o)$.
\end{prp}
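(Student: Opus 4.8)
The plan is to lean on Proposition~\ref{prp:fhsproperties}: in ${\sf FHS}_n$ kernels, cokernels and direct sums are formed separately on the formal group component and on the ${\sf Vec}_n$ component, so once a family of component-wise maps is known to be a genuine morphism of ${\sf FHS}_n$, exactness of a sequence reduces to exactness of the underlying sequences of formal groups and of ${\sf Vec}_n$-objects, which below will always be immediate. Hence the only real work is to check, repeatedly, that the obvious component-wise maps respect the augmentation $h$ and the projections $\pi_i$. As a preliminary I would record that $V^o=\{V^o_n\to\cdots\to V^o_1\}$ is indeed a subobject of $V$ in ${\sf Vec}_n$: for $x\in V^o_i=\Ker\pi_i$ the commutative diagram of Definition~\ref{def:fhsn} gives $\pi_{i-1}(v_i(x))=p(\pi_i(x))=0$, where $p\colon H_\C/F^i\to H_\C/F^{i-1}$ is the canonical projection, so $v_i$ restricts to $V^o_i\to V^o_{i-1}$; thus $(0,V^o)$ and $(H^o,V^o)$ make sense as objects of ${\sf FHS}_n$ as soon as we supply compatible augmentations.

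For (i): the component-wise map $(0,V^o)\to(H,V)$ given by $0\hookrightarrow H$ and by $V^o\hookrightarrow V$ is a morphism of ${\sf FHS}_n$: compatibility with augmentations is vacuous since the source has trivial formal group, and compatibility with the $\pi_i$ holds because $\pi_i$ vanishes on $V^o_i$. Its cokernel, formed component-wise, is $(H,V/V^o)$ with augmentation and projections induced from $h$ and the $\pi_i$, which is precisely the object $(H,V)_\x$; this is the first sequence. For the second, the inclusion $H_\et\hookrightarrow H=H^o\x H_\et$ of the second factor, together with $\id$ on $V/V^o$, defines a monomorphism $(H,V)_\et\to(H,V)_\x$ (here $(H,V)_\et$ is a legitimate object via the embedding ${\sf MHS}_n\hookrightarrow{\sf FHS}_n$); the one compatibility worth a word, that the augmentation $\pi_n\circ h$ of $(H,V)_\x$ restricts on $H_\et$ to the augmentation $c$ of $(H,V)_\et$, is the identity $\pi_n\circ h_\et=c$ built into Definition~\ref{def:fhsn}. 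Its component-wise cokernel is $(H^o,0)$, giving the second sequence.

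For (ii): in the direction $(\Rightarrow)$ I would assume $h^o=\iota_n\circ\tilde h^o$ with $\tilde h^o\colon H^o\to V^o_n$ and $\iota_n\colon V^o_n\hookrightarrow V_n$ the inclusion. Then $(H,V)^o=(H^o,V^o)$, with augmentation $\tilde h^o$ (the projections being automatically zero, as the étale part is trivial), is an object of ${\sf FHS}_n$, and the inclusions $H^o\hookrightarrow H$, $V^o\hookrightarrow V$ assemble into a monomorphism $(H,V)^o\to(H,V)$ whose compatibility with the augmentations is exactly that factorization, the $\pi_i$-compatibility being once more the vanishing of $\pi_i$ on $V^o_i$. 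Its component-wise cokernel is $(H_\et,V/V^o)=(H,V)_\et$, which yields the asserted sequence; all its maps are the evident ones, so it is canonical. In the direction $(\Leftarrow)$, the existence of such a canonical sequence forces $(H^o,V^o)\to(H,V)$ to be a morphism of ${\sf FHS}_n$, and comparing augmentations then shows $h^o$ equals $\iota_n$ composed with the augmentation $H^o\to V^o_n$ of $(H,V)^o$, i.e. that $h^o$ factors through $V^o_n$.

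I expect the only obstacle to be this bookkeeping with augmentations and projections; there is no homological subtlety, since abelianness plus the component-wise description make each exactness claim automatic once the maps are legitimate. The one genuine point --- isolated in (ii) --- is that the quotient $V/V^o$ always carries the correct augmentation, so the first two sequences exist unconditionally, whereas $(H^o,V^o)$ can be realized as a subobject of $(H,V)$ precisely when the augmentation $h^o$ of $(H,V)$ lands in $\Ker\pi_n=V^o_n$.
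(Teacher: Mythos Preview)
Your proof is correct and follows essentially the same approach as the paper: both reduce to the component-wise description of kernels and cokernels from Proposition~\ref{prp:fhsproperties} and then verify that the evident maps are genuine morphisms of ${\sf FHS}_n$. The only cosmetic difference is that for the second sequence in (i) and for (ii) the paper works from the quotient side (checking that $(H,V)_\x\to(H^o,0)$, respectively $(H,V)\to(H,V)_\et$, is a morphism and computing the kernel), whereas you work from the subobject side; the content is the same.
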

\begin{proof}
		i) Let $(0,\theta):(0,V^o)\to (H,V)$ be the canonical inclusion. By \ref{prp:fhsproperties} $\Coker (0,\theta)$ can be calculated in the product category ${\sf FrmGrp}\x {\sf Vec}_n$, i.e.  $\Coker (0,\theta)=\Coker 0\x \Coker \theta= H\x V/V^o$ and the augmentation $H\to H_\C/F^n$ is the composition $H \stackrel{h}{\rightarrow}V_n \stackrel{\pi_n}{\rightarrow}H_\C/F^n$.
		
		For the second exact sequence  consider the natural projection $p^o:H\to H^o$. It induces a morphism $(p^o,0):(H,V)_\x\to (H^o,0)$. Using the same argument as above we get $\Ker (p^o,0)=\Ker p^o\x \Ker 0= H_\et\x V/V^0$ as an object of ${\sf FrmGrp}\x {\sf Vec}_n$. From this follows the second exact sequence.
		
		ii) By the  definition of a morphism of formal Hodge structures (of level $\le n$) we get that the canonical map, in the category ${\sf FrmGrp}\x {\sf Vec}_n$, $(p_{\Z}, \pi): H\x V\to H_\et \x V/V^o$ induces a morphism of formal Hodge structures $\iff$ the following diagram commutes
		\begin{equation*}
		\xymatrix{
		H \ar[d]_{h}\ar[r]^{p_\Z}&    H_\Z\ar[d]^{}\\
		V_n\ar[r]_{\pi_n} & H_\C/F^n  }
		\end{equation*}
		i.e. $\pi_n h(x,y)=y \mod F^nH_\C$ for all $x\in H^o,\ y\in H_\et$ $\iff$ $h^o(x)=0.$
\end{proof}
\begin{rmk}
	With the above notations  consider the map $(p^o,0):H\x V\to H^o\x 0$. Note that this is a morphism of formal Hodge structure $\iff$ $V^0=0$ $\iff$ $(H,V)=(H,V)_\x$.
\end{rmk}
\begin{rmk}
	For $n=0$ we can also use the same definitions, but the situation is much more easier. In fact a formal structure of level $0$ is just a formal group $H$, hence  there is a split exact sequence
	\[
		0\to H^o\to H\to H_{\rm et}\to 0
	\]
	in ${\sf FHS}_0(0)$.
\end{rmk}
\begin{crl}
	Let ${\frak K}_0({\sf FHS_n})$ be the Grothendieck group (see \cite[Def. A.4]{peters-steenbrink}) associated to the abelian category ${\sf FHS}_n$. Then
	\begin{align*}
		{\frak K}_0({\sf FHS_n}) & = {\frak K}_0({\sf Vec})\x {\frak K}_0({\sf Vec}_n)\x {\frak K}_0({\sf MHS}_n)\\
		&\iso \{(f,g)\in \Z[t]\x \Z[u,v]|\ \deg_t f,\deg_u g, \deg_v g\le n\ , \ g(u,v)=g(v,u) \}
	\end{align*}
\end{crl}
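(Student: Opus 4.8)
The plan is to compute the Grothendieck group by exploiting the canonical decompositions of Proposition~\ref{prp:fhssplittings}, which show that every object of ${\sf FHS}_n$ is, up to the relations in ${\frak K}_0$, built out of three "pure" pieces: a connected special part $(0,V^o)$ with $V^o\in{\sf Vec}_n$, an object $(H^o,0)$ coming from ${\sf Vec}={\sf Vec}_1$, and an étale part $(H,V)_\et$ coming from ${\sf MHS}_n$. First I would use the two short exact sequences
\[
0\to (0,V^o)\to (H,V)\to (H,V)_\x\to 0,\qquad 0\to (H,V)_\et\to (H,V)_\x\to (H^o,0)\to 0
\]
to get, in ${\frak K}_0({\sf FHS}_n)$, the identity $[(H,V)]=[(0,V^o)]+[(H,V)_\et]+[(H^o,0)]$. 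This immediately produces a surjective homomorphism ${\frak K}_0({\sf Vec})\times{\frak K}_0({\sf Vec}_n)\times{\frak K}_0({\sf MHS}_n)\twoheadrightarrow{\frak K}_0({\sf FHS}_n)$, induced by $([W^o],[W],[M])\mapsto [(0,W)]+[W^o\text{ placed in }V_n\text{-degree, }0\text{ elsewhere}]+[\iota(M)]$, where $\iota$ is the embedding of Proposition~\ref{prp:fhsproperties}(iii); equivalently one sends a vector space $W^o$ to $(W^o,0)$ via the embedding of Proposition~\ref{prp:fhsproperties}(iv) composed with the inclusion ${\sf Vec}\hookrightarrow{\sf Vec}_n$ in top degree, but since $(H^o,0)$ has augmentation zero it is more cleanly recorded separately.

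The reverse map is the hard part, and I would construct it via the three exact forgetful-type functors. The functor $(H,V)\mapsto V^o=\Ker(\pi_\bullet)$ is exact (kernels in ${\sf FHS}_n$ are computed componentwise by Proposition~\ref{prp:fhsproperties}, and $V\mapsto H_\C/F^\bullet$ is exact on ${\sf MHS}$ by the Lemma, so $V\mapsto V^o$ is exact), hence induces ${\frak K}_0({\sf FHS}_n)\to{\frak K}_0({\sf Vec}_n)$; the functor $(H,V)\mapsto H^o$ is exact with values in ${\sf Vec}$; and the functor $(H,V)\mapsto (H,V)_\et\cong H_\et\in{\sf MHS}_n$ is exact. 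These three together give a homomorphism ${\frak K}_0({\sf FHS}_n)\to{\frak K}_0({\sf Vec})\times{\frak K}_0({\sf Vec}_n)\times{\frak K}_0({\sf MHS}_n)$. One then checks that the two maps are mutually inverse: the composite in one direction is the identity on each factor because the three forgetful functors applied to the "pure" building blocks return the expected object and zero on the other two factors (here one uses that $(0,V^o)_\et=0$ and its $H^o$ is $0$, etc.), and the composite in the other direction is the identity on ${\frak K}_0({\sf FHS}_n)$ precisely by the displayed decomposition $[(H,V)]=[(0,V^o)]+[(H,V)_\et]+[(H^o,0)]$.

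It remains to identify the three Grothendieck groups explicitly. For ${\frak K}_0({\sf Vec})$ the rank (dimension) gives ${\frak K}_0({\sf Vec})\cong\Z$, recorded as the monomial coefficient of $t^0$ — but in the statement the grading by degree in $t$ up to $n$ reflects that ${\sf Vec}_1$ sits in a single degree; more precisely ${\frak K}_0({\sf Vec}_n)\cong\Z^n$ via the tuple of dimensions $(\dim V_n,\dots,\dim V_1)$, encoded as a polynomial in $\Z[u]$ of degree $\le n$ (say $\sum \dim V_i\,u^{i-1}$ or $u^{n-i}$), and similarly the extra ${\sf Vec}$-factor contributes the $\Z[t]$ with $\deg_t\le n$ by recording $H^o$ together with the (trivial) extra room. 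For ${\frak K}_0({\sf MHS}_n)$ one uses the standard fact (Deligne; see also \cite{peters-steenbrink}) that ${\frak K}_0({\sf MHS})$ is the free abelian group on the Tate twists of pure polarizable pieces, so that recording $(p,q)\mapsto u^pv^q$ identifies ${\frak K}_0({\sf MHS}_n)$ with the subgroup of symmetric elements $g(u,v)=g(v,u)$ of $\Z[u,v]$ with $\deg_u g,\deg_v g\le n$ — symmetry coming from complex conjugation $h^{p,q}=h^{q,p}$ and the degree bound from "level $\le n$". Assembling these three identifications gives the displayed description of ${\frak K}_0({\sf FHS}_n)$. The main obstacle is purely bookkeeping: making the three polynomial-variable conventions $(t,u,v)$ match across the three factors and checking that the level/degree constraints and the symmetry constraint transport correctly through the isomorphism; the homological input (exactness of the relevant functors, the canonical decomposition) is already in hand from the preceding results.
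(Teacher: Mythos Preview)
Your proposal is correct and takes essentially the same approach as the paper: the paper's own proof is the single sentence ``It follows easily by (i) of \ref{prp:fhssplittings},'' and your argument is precisely an unpacking of that sentence, using the two canonical exact sequences to split the class $[(H,V)]$ and the exact forgetful-type functors to produce the inverse map. The only additional content you supply is the explicit identification of the three factor groups with the polynomial description, which the paper leaves entirely to the reader; your bookkeeping there is slightly muddled (the single polynomial $f\in\Z[t]$ of degree $\le n$ records \emph{both} $\dim H^o$ and the $n$ dimensions $\dim V_i^o$, accounting for the $n{+}1$ coefficients), but you correctly flag this as a matter of convention rather than substance.
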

\begin{proof}
	It follows easily by (i) of \ref{prp:fhssplittings}.
\end{proof}
By \ref{prp:fhsproperties} there exists a canonical embedding ${\sf MHS}_n\subset{\sf FHS}_n$ (resp. ${\sf Vec}_n\subset{\sf FHS}_n$). It is easy to check that this embedding gives, in the usual way, a full embedding when passing to the associated homotopy categories, i.e. 
\begin{equation}\label{eq:kmhsinkfhs}
	K({\sf MHS}_n)\subset K({\sf FHS}_n)\ ,\qquad\text{resp.}\ K({\sf Vec}_n)\subset K({\sf FHS}_n)\ .
\end{equation}
With the following lemma we can prove that we have an embedding when passing to the associated derived categories.
\begin{lmm}\label{lmm:fullloc}
	Let $\sf A'\subset A$ be a full embedding of categories. Let $S$ be a multiplicative system in $\sf A$ and $S'$ be its restriction to $\sf A'$. Assume that one of the following conditions
	
	i) For any $s:A'\to A$ (where $A'\in \sf A'$, $A\in \sf A$, $s\in S$) there exists a morphism $f:A\to B'$ such that $B'\in \sf A'$ and $f\circ s\in S$.
	
	ii) The same as (i) with the arrow reversed.\\
	Then the localization ${\sf A}_{S'}'$ is a full sub-category of ${\sf A}_S$.
\end{lmm}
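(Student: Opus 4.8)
The plan is to describe the morphisms of ${\sf A}_S$ via a calculus of fractions and to check that the canonical comparison functor $\Phi\colon{\sf A}'_{S'}\to{\sf A}_S$ — which exists because the composite ${\sf A}'\hookrightarrow{\sf A}\to{\sf A}_S$ sends the morphisms of $S'$ to isomorphisms — is fully faithful; being already injective on objects, $\Phi$ then identifies ${\sf A}'_{S'}$ with a full subcategory of ${\sf A}_S$, which is the claim. Replacing $({\sf A},{\sf A}',S,S')$ by the opposite data exchanges hypotheses (i) and (ii) (and left fractions for right ones), so it suffices to treat case (i). Under (i) one uses $S$ as a \emph{left} multiplicative system on ${\sf A}$, so that $\Hom_{{\sf A}_S}(X,Y)=\colim_{(s\colon Y\to Y')\in S}\Hom_{\sf A}(X,Y')$ and a morphism $X\to Y$ is a roof $X\xrightarrow{g}Z\xleftarrow{s}Y$ with $s\in S$, taken up to the usual equivalence.

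The first thing to establish is that $S'=S\cap\mathrm{Mor}({\sf A}')$ is again a left multiplicative system, now on ${\sf A}'$, so that $\Hom_{{\sf A}'_{S'}}(X,Y)$ admits the same colimit description. Each axiom is checked by the same transport trick: run the corresponding axiom for $S$ inside ${\sf A}$, producing some auxiliary object $Z\in{\sf A}$ and some $s'\in S$ whose source lies in ${\sf A}'$; then apply (i) to $s'$ to get $f\colon Z\to D'$ with $D'\in{\sf A}'$ and $f s'\in S$; finally invoke fullness of ${\sf A}'\subset{\sf A}$ to see that the maps one has produced between objects of ${\sf A}'$ are morphisms of ${\sf A}'$, and that $f s'\in S'$. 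For instance, the left Ore square on $X'\xleftarrow{s}X\xrightarrow{g}Y$ in ${\sf A}'$ (with $s\in S'$) is obtained by first completing it in ${\sf A}$ to $X'\xrightarrow{g_0}Z\xleftarrow{t_0}Y$ with $t_0\in S$, then post-composing with the $f$ that (i) attaches to $t_0$.

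Fullness and faithfulness of $\Phi$ are then both instances of ``pushing the apex of a roof into ${\sf A}'$''. For fullness: a morphism $\phi\in\Hom_{{\sf A}_S}(A',B')$ with $A',B'\in{\sf A}'$ is represented by a roof $A'\xrightarrow{g}Z\xleftarrow{t}B'$ with $t\in S$ and $Z\in{\sf A}$; applying (i) to $t\colon B'\to Z$ gives $f\colon Z\to C'$ with $C'\in{\sf A}'$ and $ft\in S$, and post-composing both legs with $f$ turns the roof into $A'\xrightarrow{fg}C'\xleftarrow{ft}B'$, all of whose data now lie in ${\sf A}'$ (with $ft\in S'$); this is a roof in $({\sf A}',S')$ mapping to $\phi$ under $\Phi$. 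For faithfulness: if $\phi'_1,\phi'_2\in\Hom_{{\sf A}'_{S'}}(A',B')$ agree in ${\sf A}_S$, represent them over a common denominator $A'\xrightarrow{g_i}C'\xleftarrow{t}B'$ with $t\in S'$ (possible by the preliminary step); agreement in ${\sf A}_S$ means there is $u\colon C'\to Z$ in ${\sf A}$ with $ut\in S$ and $ug_1=ug_2$, and applying (i) to $ut\colon B'\to Z$ yields $f\colon Z\to D'$ with $D'\in{\sf A}'$ and $f(ut)\in S$, so that $fu$ is a morphism of ${\sf A}'$ with $(fu)t\in S'$ and $(fu)g_1=(fu)g_2$; hence $g_1$ and $g_2$ already coincide in the colimit computing $\Hom_{{\sf A}'_{S'}}(A',B')$, i.e. $\phi'_1=\phi'_2$.

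The step I expect to require the most care is the preliminary one: verifying that (i), together with fullness of the embedding and multiplicativity of $S$ in ${\sf A}$, really delivers \emph{every} axiom of a left multiplicative system for $S'$, and — before that — fixing the orientation of the fraction calculus so that (i), rather than (ii), is the hypothesis being exploited; case (ii) is then obtained formally by dualizing. Once $S'$ is known to be a left multiplicative system on ${\sf A}'$, the fullness and faithfulness arguments above are the routine roof manipulations sketched here.
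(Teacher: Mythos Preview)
The paper does not supply its own argument here: its entire proof is the citation \cite[1.6.5]{kashiwara.schapira:som}. Your sketch is a correct unpacking of precisely that standard argument---check that $S'$ is again a (left, under (i)) multiplicative system by post-composing with the $f$ furnished by (i), then push the apex of any roof (respectively, of any equivalence witnessing equality of roofs) from ${\sf A}$ into ${\sf A}'$ to obtain fullness (respectively, faithfulness) of $\Phi$---so there is nothing substantive to compare; you have simply written out what the paper leaves to the reference.
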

\begin{proof}
	\cite[1.6.5]{kashiwara.schapira:som}.
\end{proof}
\begin{prp}
	There is a full embedding of  categories $D({\sf MHS}_n)\subset D({\sf FHS}_n)$ (resp. $D({\sf Vec}_n)\subset D({\sf FHS}_n)$).
\end{prp}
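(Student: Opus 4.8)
The plan is to deduce the statement from Lemma~\ref{lmm:fullloc} applied to the full embeddings $\mathbf{A}'=K({\sf MHS}_n)\subset K({\sf FHS}_n)=\mathbf{A}$ (and likewise for ${\sf Vec}_n$), which we already know to be full by \eqref{eq:kmhsinkfhs}. Here $S$ is the multiplicative system of quasi-isomorphisms in $K({\sf FHS}_n)$ and $S'$ its restriction to $K({\sf MHS}_n)$; since the forgetful functors $(H,V)\mapsto H_\et$ and $(H,V)\mapsto V$ are exact (Proposition~\ref{prp:fhsproperties}(ii)), a morphism of complexes in $K({\sf MHS}_n)$ is a quasi-isomorphism in $K({\sf FHS}_n)$ if and only if it is one in $K({\sf MHS}_n)$, so $D({\sf MHS}_n)=K({\sf MHS}_n)_{S'}$ and $D({\sf FHS}_n)=K({\sf FHS}_n)_S$ in the sense required. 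It then suffices to verify condition (i) (or (ii)) of the lemma.

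First I would reduce to producing, for an arbitrary object $(H,V)$ of ${\sf FHS}_n$, a functorial surjection (or injection) onto (from) an object of ${\sf MHS}_n$ whose kernel (cokernel) is acyclic in a controlled way; more precisely, given a quasi-isomorphism $s\colon A'^\bullet\to A^\bullet$ with $A'^\bullet$ a complex of mixed Hodge structures and $A^\bullet$ a complex in ${\sf FHS}_n$, I want a morphism $f\colon A^\bullet\to B'^\bullet$ in $K({\sf FHS}_n)$ with $B'^\bullet$ a complex in ${\sf MHS}_n$ and $f\circ s\in S$. The natural candidate for the functor realizing this is $(H,V)\mapsto (H,V)_\et=(H_\et, V/V^o)$, which by construction lands in ${\sf FHS}_{n,\et}\simeq {\sf MHS}_n$ and comes with the canonical morphism $(H,V)\to (H,V)_\et$ appearing in Proposition~\ref{prp:fhssplittings}(i) (the two-step composite $(H,V)\to (H,V)_\x\to$, followed by the identification of $(H,V)_\x/(0,V^o)$-type data); one checks the kernel of $(H,V)\to (H,V)_\et$ is built out of the purely "formal" data $H^o$ and $V^o$, i.e. lies in the image of ${\sf Vec}$ and ${\sf Vec}_n$. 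Applying this termwise to $A^\bullet$ gives $f\colon A^\bullet\to A^\bullet_\et=:B'^\bullet$, and since $A'^\bullet$ is already a complex of (étale) mixed Hodge structures, the composite $A'^\bullet\xrightarrow{s}A^\bullet\xrightarrow{f}B'^\bullet$ restricts on étale components to $s$ itself up to the canonical identifications, hence is a quasi-isomorphism. The ${\sf Vec}_n$ case is handled by the dual construction, using instead the functor $(H,V)\mapsto (H,V)^o$ or the projection onto the "formal part", again supplied by the canonical decomposition of Proposition~\ref{prp:fhssplittings}.

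The step I expect to be the main obstacle is checking the hypothesis of Lemma~\ref{lmm:fullloc} cleanly, i.e. verifying that $f\circ s$ is genuinely a quasi-isomorphism in $K({\sf FHS}_n)$ and not merely termwise well-behaved, and in particular confirming that the relevant multiplicative systems and localizations really are the derived categories in the strict sense the lemma requires (this uses that quasi-isomorphisms in ${\sf FHS}_n$ are detected componentwise, which is exactly Proposition~\ref{prp:fhsproperties}(ii)). A secondary subtlety is functoriality of the retraction $(H,V)\mapsto (H,V)_\et$ at the level of complexes: one must make sure the canonical morphisms of Proposition~\ref{prp:fhssplittings}(i) assemble into an honest natural transformation $\id\Rightarrow (-)_\et$ so that $f$ is a bona fide morphism of complexes, but this follows from the naturality of kernels and cokernels used to define $V^o$ and the quotient $V/V^o$. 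Once these points are in place, Lemma~\ref{lmm:fullloc} yields that $D({\sf MHS}_n)$ (resp.\ $D({\sf Vec}_n)$) is a full subcategory of $D({\sf FHS}_n)$, which is the assertion.
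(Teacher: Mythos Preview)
Your overall strategy---applying Lemma~\ref{lmm:fullloc} and constructing a retraction onto the \'etale part---is natural, but there is a genuine gap: the morphism $(H,V)\to (H,V)_\et$ you want to use does not exist in ${\sf FHS}_n$ in general. Proposition~\ref{prp:fhssplittings}(i) gives two exact sequences, but read them carefully: the first is a surjection $(H,V)\twoheadrightarrow (H,V)_\x$, while in the second $(H,V)_\et$ is a \emph{subobject} of $(H,V)_\x$, not a quotient. There is no composite $(H,V)\to(H,V)_\x\to(H,V)_\et$. Proposition~\ref{prp:fhssplittings}(ii) and its proof make this explicit: the projection $(p_\Z,\pi):(H,V)\to(H,V)_\et$ is a morphism of formal Hodge structures if and only if $h^o$ factors through $V_n^o$, i.e.\ only when $(H,V)$ is special. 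So your candidate $f:A^\bullet\to A^\bullet_\et$ is not, in general, a morphism of complexes in ${\sf FHS}_n$.

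This is precisely why the paper's proof proceeds in two stages through the intermediate subcategory ${\sf FHS}_{n,\x}$: first one uses condition (i) of Lemma~\ref{lmm:fullloc} together with the surjection $(H,V)\to(H,V)_\x$ to get $D({\sf FHS}_{n,\x})\subset D({\sf FHS}_n)$; then one uses condition (ii) of the lemma together with the \emph{inclusion} $(H,V)_\et\hookrightarrow(H,V)_\x$ to get $D({\sf MHS}_n)\subset D({\sf FHS}_{n,\x})$. The two directions of the canonical decomposition point opposite ways, and each matches one of the two dual hypotheses of the lemma; collapsing them into a single step is exactly what fails. Your ${\sf Vec}_n$ sketch has the same issue in reverse, since $(H,V)^o$ is only defined as an object of ${\sf FHS}_n$ when $(H,V)$ is special.
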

\begin{proof}
	We will prove only the case involving  ${\sf MHS}_n$, the other one is similar. First note that similarly to \eqref{eq:kmhsinkfhs} there is a full embedding $K({\sf FHS}_{n,\x})\subset K({\sf FHS}_{n})$, where ${\sf FHS}_{n,\x}$ is the full sub-category of ${\sf FHS}_n$ with objects $(H,V)$ such that $(H,V)=(H,V)_\x$ (See \ref{prp:fhssplittings}). Now using $(i)$ of lemma \ref{lmm:fullloc} and the first exact sequence of \ref{prp:fhssplittings} we get a full embedding $D({\sf FHS}_{n,\x})\subset D({\sf FHS}_{n})$.\\
	Then consider the canonical embedding ${\sf MHS}_n\subset{\sf FHS}_{n,\x}$. Again we get a full embedding of triangulated categories $K({\sf MHS}_{n})\subset K({\sf FHS}_{n,\x})$. Now using $(ii)$ of lemma \ref{lmm:fullloc} and the second exact sequence of \ref{prp:fhssplittings} we get a full embedding $D({\sf FHS}_{n,\x})\subset D({\sf FHS}_{n})$. 
\end{proof}
\subsection{Adjunctions}
\begin{prp}
	The following adjunction formulas hold
	
	i) $\Hom_{\sf MHS}(H_\et,H_\et')\iso \Hom_{{\sf FHS}_n}((H,V),(H_\et',H_\C'/F))$ for all $(H,V)\in {\sf FHS}_n^s$ (i.e. special), $H_\et'\in {\sf MHS}_n$.
	
	ii) $\Hom_{{\sf FHS}_n}((H^o,V),(H',V'))\iso \Hom_{{\sf FHS}_n}((H^o,V),((H')^o,(V')^o)) $ for all $(H^o,V)\in {\sf FHS}_n^{o}$ (i.e. connected), $(H',V')\in {\sf FHS}_n^s$.
\end{prp}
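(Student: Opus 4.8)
The plan is to establish both isomorphisms by writing out explicitly what a morphism in ${\sf FHS}_n$ is and checking that, under the hypotheses, the defining compatibility conditions either become automatic or pin the remaining data down uniquely.

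For (i): denote by $(H_\et',H_\C'/F)$ the image of $H_\et'$ under the embedding of Proposition~\ref{prp:fhsproperties}(iii), so that on the target one has $\pi_i'=\id$, the transition maps $v_i'$ are the canonical projections $p_i':H_\C'/F^{i+1}\to H_\C'/F^i$, $h_\et'=c'$ is the canonical surjection $H_\et'\to H_\C'/F^n$, $(h')^o=0$, and the connected part is zero. I would first observe that any morphism $(f,\phi):(H,V)\to(H_\et',H_\C'/F)$ must have $f^o=0$ (the connected part of the target is zero) and, from the compatibility $\pi_i'\phi_i=\bar f_i\pi_i$, must have $\phi_i=\bar f_i\circ\pi_i$, where $\bar f_i:H_\C/F^i\to H_\C'/F^i$ is the map induced by $f_\et$; hence $(f,\phi)\mapsto f_\et$ is injective. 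Conversely, given a morphism of mixed Hodge structures $f_\et:H_\et\to H_\et'$, I would set $\phi_i:=\bar f_i\circ\pi_i$ and verify the remaining conditions: that $\phi$ is a morphism in ${\sf Vec}_n$, which follows from the relation $\pi_i\circ v_{i+1}=p_i\circ\pi_{i+1}$ ($p_i$ the canonical projection) in the diagram of $(H,V)$ together with the compatibility of the $\bar f_i$ with the $p_i,p_i'$; that $\phi_n\circ h_\et=c'\circ f_\et$, which follows from $\pi_n h_\et=c$ and the tautology $\bar f_n c=c' f_\et$ (where $\bar f_n$ is well defined by strictness of $f_\et$ for $F$); and that $\phi_n\circ h^o=0$. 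This last point is exactly where \emph{specialness} is used: $(H,V)$ special means $\pi_n h^o=0$, whence $\phi_n h^o=\bar f_n\pi_n h^o=0$. Thus $f_\et\mapsto(f,\phi)$ is a two-sided inverse, and the resulting bijection is visibly natural in both arguments (it exhibits $(H,V)\mapsto H_\et$ as a left adjoint on ${\sf FHS}_n^s$ to the embedding of ${\sf MHS}_n$).

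For (ii): note first that a connected object $(H^o,V)$ is automatically special, since there all $\pi_i$ vanish and hence $V_i^o=V_i$; moreover its étale part is $0$. Since $(H',V')$ is special, Proposition~\ref{prp:fhssplittings}(ii) provides a short exact sequence $0\to(H',V')^o\to(H',V')\to(H',V')_\et\to 0$ in ${\sf FHS}_n$, with $(H',V')^o=((H')^o,(V')^o)$ and, under the identification ${\sf FHS}_{n,\et}\iso{\sf MHS}_n$, with $(H',V')_\et$ equal to $(H_\et',H_\C'/F)$. Applying the left exact functor $\Hom_{{\sf FHS}_n}((H^o,V),-)$ gives an exact sequence whose rightmost term is $\Hom_{{\sf FHS}_n}((H^o,V),(H_\et',H_\C'/F))$; by part (i) (legitimately applied, as $(H^o,V)$ is special) this equals $\Hom_{\sf MHS}(0,H_\et')=0$, the $0$ being the étale part of the connected object $(H^o,V)$. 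Hence the monomorphism $((H')^o,(V')^o)\hookrightarrow(H',V')$ induces an isomorphism on $\Hom$-groups, which is (ii).

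I do not expect a genuine obstacle; the one delicate point is a careful reading of the (notationally dense) compatibility diagram defining morphisms of ${\sf FHS}_n$, so as to sort the conditions $\pi_i'\phi_i=\bar f_i\pi_i$, $\phi_n h_\et=h_\et' f_\et$, $\phi_n h^o=(h')^o f^o$ into those that hold automatically and those that determine the remaining data. If one prefers to avoid the exact-sequence argument, (ii) can instead be proved by hand: from $\pi_i'\phi_i=\bar f_i\pi_i=0$ each $\phi_i$ factors through $V_i'^o=\Ker\pi_i'$, and the augmentation identity $\phi_n h^o=(h')^o f^o$ already takes place inside $V_n'^o$ because specialness puts $(h')^o$ there, so the datum $(f^o,\phi)$ is literally the same as a morphism into $((H')^o,(V')^o)$. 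I would nonetheless present the short exact sequence argument as the main one, since it reuses (i) cleanly.
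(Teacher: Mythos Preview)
Your proposal is correct. For part (i) you follow essentially the same direct approach as the paper --- unwinding the definition of a morphism and observing that $\pi_i'=\id$ forces $\phi_i=\bar f_i\circ\pi_i$ --- though you are more explicit than the paper in identifying the one place where specialness is actually used (namely, to ensure $\phi_n h^o=\bar f_n\pi_n h^o=0$ when constructing the inverse).

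For part (ii) your main argument differs from the paper's. The paper gives only the direct ``by hand'' verification that you sketch at the end: since the source is connected one has $f_\et=0$, hence $\bar f=0$, hence $\pi'\circ\phi=\bar f\circ\pi=0$, so each $\phi_i$ factors through $(V')_i^o$. Your primary route instead applies $\Hom_{{\sf FHS}_n}((H^o,V),-)$ to the short exact sequence of Proposition~\ref{prp:fhssplittings}(ii) and kills the third term using part (i). Both arguments are valid; the exact-sequence argument is tidier and recycles (i), while the direct argument is self-contained and makes more transparent why specialness of $(H',V')$ is needed (so that $(h')^o$ lands in $(V')_n^o$ and $((H')^o,(V')^o)$ is a well-defined object of ${\sf FHS}_n$ in the first place).
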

\begin{proof}
	The proof is straightforward. Explicitly:
	i) Let $(H,V)\in {\sf FHS}_n^{s}$, $H_\et'\in {\sf MHS}_n$. By definition a morphism $(f,\phi)\in \Hom_{{\sf FHS}_n}((H,V),(H_\et',H_\C'/F))$ is a morphism of formal groups $f:H\to H'$ such that $f_\et$ is a morphism of mixed Hodge structures, hence $f=f_\et$, and $\phi:V\to H_\C'/F$ is subject to the condition $f/F\circ \pi=\phi$. Then the association $(f,\phi)\mapsto f_\et\in \Hom_{\sf MHS}(H_\et,H_\et')$ is an isomorphism.
	
	ii) Let $(H^o,V)\in {\sf FHS}_n^{o}$, $(H',V')\in {\sf FHS}_n^{s}$.\\
	 A morphism $(f,\phi)$ in $\Hom_{{\sf FHS}_n}((H^o,V),(H',V'))$ is of the form $f=f^o:H^o\to (H')^o$, $\phi:V\to V'$ must factor through $(V')^o$ because $\pi'\circ\phi=\pi\circ f/F=0$. 
\end{proof}
\subsection{Different levels}
Any mixed Hodge structure of level $\le n$ (say in ${\sf MHS}_n(0)$) can also be viewed as an object of ${\sf MHS}_m(0)$ for any $m>n$. This give a sequence of full embeddings
\[
	{\sf MHS}_0\subset {\sf MHS}_1\subset\cdots\subset \sf MHS
\] 
In this section we want to investigate the analogous situation in the case of formal Hodge structures. 

Consider the two functors $\iota, \eta:{\sf Vec}_n\rightarrow {\sf Vec}_{n+1}$	defined as follows
\begin{align*}
	\iota(V):\ & \quad \iota(V)_{n+1}=V_n \stackrel{\id}{\rightarrow}\iota(V)_{n}=V_n \stackrel{v_n}{\rightarrow}\cdots\to V_1 \\
	\eta(V):\ & \quad \eta(V)_{n+1}=0 \stackrel{0}{\rightarrow}\iota(V)_{n}=V_n \stackrel{v_n}{\rightarrow}\cdots\to V_1 
\end{align*}
\begin{prp}\label{prp:veciota}
	The functors $\iota, \eta$ are full and faithful. Moreover the essential image of $\iota$ (resp. $\eta$)  is a thick sub-category\footnote{By thick we mean a sub-category closed under kernels, co-kernels and extensions}.
\end{prp}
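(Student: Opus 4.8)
The plan is to verify the three assertions — full faithfulness, and thickness of each essential image — directly from the very explicit componentwise description of $\sf{Vec}_n$, $\sf{Vec}_{n+1}$ and the functors $\iota,\eta$. Recall that a morphism in $\sf{Vec}_m$ is just a compatible family of $\C$-linear maps between the terms of the diagrams, and that kernels, cokernels and extensions are all computed termwise (the relevant abelian structure on $\sf{Vec}_m$ being the obvious one).

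First I would prove full faithfulness of $\iota$. Given $V,V'\in\sf{Vec}_n$, a morphism $\iota(V)\to\iota(V')$ is a family $\{g_i\}_{1\le i\le n+1}$ with $g_i\colon \iota(V)_i\to\iota(V')_i$ commuting with the transition maps. For $i\le n$ we have $\iota(V)_i=V_i$, and the square relating indices $n+1$ and $n$, whose horizontal arrows are the identities $\id\colon V_n\to V_n$, forces $g_{n+1}=g_n$. Hence $\{g_i\}_{1\le i\le n}$ is already a morphism $V\to V'$ in $\sf{Vec}_n$ (the commutativity conditions for $1\le i\le n-1$ are literally the same), and this sets up an inverse to $\iota$ on Hom-sets; injectivity on objects/morphisms is clear. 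For $\eta$ the argument is even simpler: a morphism $\eta(V)\to\eta(V')$ has its top component $g_{n+1}\colon 0\to 0$ uniquely determined, and the remaining data $\{g_i\}_{1\le i\le n}$ is exactly a morphism $V\to V'$, with the square at index $n$ automatically commuting since its top-left corner is $0$. So both functors are full and faithful.

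Next, thickness of the essential image of $\eta$. An object $W\in\sf{Vec}_{n+1}$ lies in the image of $\eta$ (up to isomorphism) precisely when $W_{n+1}=0$; this condition is obviously stable under termwise kernels and cokernels. For extensions, given a short exact sequence $0\to W'\to W\to W''\to 0$ in $\sf{Vec}_{n+1}$ with $W'_{n+1}=W''_{n+1}=0$, exactness in degree $n+1$ gives $W_{n+1}=0$ as well, so $W$ is in the image. For $\iota$, I would characterize its essential image as the full subcategory of $W\in\sf{Vec}_{n+1}$ for which the transition map $w_{n+1}\colon W_{n+1}\to W_n$ is an isomorphism: indeed $\iota(V)$ has $w_{n+1}=\id$, and conversely if $w_{n+1}$ is an isomorphism one checks $W\cong \iota(W_n\xrightarrow{w_n\circ w_{n+1}^{-1}}\cdots)$ — more cleanly, $W\cong\iota(V)$ where $V$ is the truncation $W_n\to\cdots\to W_1$, the isomorphism being $w_{n+1}$ in degree $n+1$ and identities below. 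This "$w_{n+1}$ is an isomorphism" condition passes to termwise kernels and cokernels because a kernel (resp. cokernel) of a morphism of isomorphisms, in the arrow category of $\sf{Vec}$, is again an isomorphism; and it passes to extensions by the five lemma applied to the two rows of transition maps in degrees $n+1$ and $n$. This last point — stability under extensions — is the only place where anything beyond bookkeeping is needed, and I expect it to be the main (though still routine) obstacle: one must run the short five lemma on the commutative ladder whose columns are the transition maps $w_{n+1}$ for the three terms of the extension.
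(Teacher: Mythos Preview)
Your proposal is correct and follows essentially the same line as the paper's proof. The paper declares full faithfulness ``straightforward'' and, for closure under extensions, writes out the $n=2$ case and invokes the snake lemma to see that the middle transition map is an isomorphism (for $\iota$) or that the top term vanishes (for $\eta$); your five-lemma argument for $\iota$ is the same step phrased with the other standard tool, and your treatment of kernels and cokernels simply fills in what the paper leaves as ``straightforward''.
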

\begin{proof}
	To check that $\iota,\eta$ are embeddings it is straightforward. We prove that the essential image of $\iota$ (resp. $\eta$) is closed under extensions only in case $n=2$ just to simplify the notations.
	
	First consider an extension of $\eta V$ by $\eta V'$ in ${\sf Vec}_3$
	\begin{equation*}
	\xymatrix{
		0\ar[r]& 0\ar[d]\ar[r]&V_3''\ar[d]\ar[r]& 0\ar[d]\ar[r]&0\\
	0\ar[r]& V_2'\ar[d]\ar[r]&V_2''\ar[d]\ar[r]& V_2\ar[d]\ar[r]&0\\ 
	0\ar[r]& V_1'\ar[r]&V_1''\ar[r]& V_1\ar[r]&0  }
	\end{equation*}
	then it follows that $V_3''=0$. 
	
	Now consider an extension of $\iota V$ by $\iota V'$ in ${\sf Vec}_3$
	\begin{equation*}
	\xymatrix{
		0\ar[r]& V_2'\ar[d]^\id\ar[r]&V_3''\ar[d]^v\ar[r]& V_2\ar[d]^\id\ar[r]&0\\
	0\ar[r]& V_2'\ar[d]\ar[r]&V_2''\ar[d]\ar[r]& V_2\ar[d]\ar[r]&0 \\
	0\ar[r]& V_1'\ar[r]&V_1''\ar[r]& V_1\ar[r]&0  }
	\end{equation*}
	Then $v$ is an isomorphism (by the snake lemma). It follows that $V''$ is isomorphic, in ${\sf Vec}_3$, to an object of $\iota {\sf Vec}_2$. To check that the essential image of $\iota$ (resp. $\eta$) is closed under kernels and cokernels is straightforward.
\end{proof}
\begin{rmk}
	The category of complexes of objects of $\sf Vec$ concentrated in degrees $1,...,n$ is a full sub-category of ${\sf Vec}_n$. Moreover the embedding induces an equivalence of categories for $n=1$ and $2$, but for $n>2$ the embedding is not thick.
\end{rmk}

\begin{exm}[${\sf FHS}_1\subset {\sf FHS}_2$]\label{exm:iotak}
	The basic construction is the following: 
		let $(H,V)$ be a $1$-fhs, we can associate a $2$-fhs $(H',V')$ represented by a diagram of the following type
			\begin{equation*}
			\xymatrix{
		H_\et'\ar[dr]_{h_\Z'}\ar[r]&H_\C'/F^{2}\ar[r]&H_\C'/F^{1}\\
		(H')^o\ar[r]_{(h')^o}&V_{2}'\ar[u]_{\pi_{2}'}\ar[r]_{v_{2}'}&V_{1}'\ar[u]_{\pi_{1}'} }
			\end{equation*}
		Take $H_\et':=H_\et$, then $H_\C'/F^{2}=H_\C$, $H_\C'/F^{1}=H_\C/F^{1}$ and the augmentation $h_\et'$ is the canonical inclusion; let $V_1':=V_1$, $\pi_1':=\pi_1$ and define $V_2',\ \pi_2',\ v_2'$ via fiber product
		\begin{equation*}
		\xymatrix{
		V_2' \ar[d]_{v_2'}\ar[r]^{\pi_2'}&    H_\C\ar[d]^{}\\
		V_1\ar[r]_{\pi_1} & H_\C/F^1  }
		\end{equation*}	
		Hence $V_2'$ fits in the following exact sequences
		\[
			0\to F^1H_\C\to V_2'\to V_1\to 0\quad ;\quad 	0\to V_1^0\to V_2'\to H_\C\to 0\ .
		\]
		Finally we define $(h')^o:(H')^o\to V_2'$ again via fiber product
		\begin{equation*}
		\xymatrix{
		(H')^o \ar[d]_{}\ar[r]^{(h')^o}&    V_2'\ar[d]^{v_2'}\\
		H^o\ar[r]_{h^o} & V_1  }
		\end{equation*}
		hence we get the following exact sequence
		\[
			0\to F^1H_\C\to 	(H')^o\to 	H^o\to 0\ .
		\]

	By induction is easy to extend this construction. We have the following result.
\end{exm}

	\begin{prp}\label{prp:difflevels}
		Let $n,k>0$. Then there exists a faithful functor
		\[
			\iota=\iota_k:{\sf FHS}_n\rightarrow {\sf FHS}_{n+k}
		\]
		Moreover $\iota$ induces an equivalence between ${\sf FHS}_n$ and the sub-category of ${\sf FHS}_{n+k}$ whose objects are $(H,V)$ such that
		
		a)  $H_\et$ is of level $\le n$. Hence $F^{n+1}H_\C=0$ and $F^{0}H_\C=H_\C$.
		
		b) $V_{n+i}=V_{n+1}$ for $1\le i\le k$.
		
		c) There exists a commutative diagram  with exact rows
		\begin{equation*}
		\xymatrix{
		F^n\ar[r]&H_\C\ar[r]&H_\C/F^n\\
		F^n\ar[rd]_\alpha\ar[u]^\id\ar[r] &V_{n+1}\ar[u]^{\pi_{n+1}}\ar[r]^{v_{n+1}}& V_n\ar[u]^{\pi_n}\\
		& H^o\ar[u]^{h^o}}
		\end{equation*}
		where  $\alpha$ is a $\C$-linear map.
		
		And morphisms are those in ${\sf FHS}_{n+k}$ compatible w.r.t. the diagram in $(c)$.
	\end{prp}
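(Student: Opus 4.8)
The plan is to construct $\iota_k$ explicitly by the fiber-product recipe of Example \ref{exm:iotak}, iterated $k$ times (or, equivalently, in one step), and then to verify the claimed characterization of its essential image. First I would define $\iota_k(H,V)$ as follows: keep $H_\et$ with its mixed Hodge structure, so that because $H_\et$ has level $\le n$ we automatically get $F^{n+1}H_\C=0$, i.e. $H_\C/F^{n+i}=H_\C$ for $1\le i\le k$; set $V_{n+i}'=V_{n+1}'$ for $1\le i\le k$ with the identity transition maps, where $V_{n+1}'$ is defined by the fiber product of $\pi_n\colon V_n\to H_\C/F^n$ with the projection $H_\C\to H_\C/F^n$; keep $V_j'=V_j$ for $j\le n$; and define the augmentation $(h')^o\colon H^o\to V_{n+1}'$ by the fiber product of $h^o\colon H^o\to V_n$ with $v_{n+1}'\colon V_{n+1}'\to V_n$, which is exactly the map $\alpha$ appearing in diagram (c). One checks that this is a formal Hodge structure of level $\le n+k$: the only nontrivial point is commutativity of the defining diagram (iii), which holds because $V_{n+1}'$ was built as a pullback. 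On morphisms, a map $(f,\phi)$ in ${\sf FHS}_n$ induces $f$ on $H$ and $\phi_j$ on $V_j'=V_j$ for $j\le n$, and the universal property of the fiber product produces a unique compatible $\phi_{n+1}'\colon V_{n+1}'\to (V'')_{n+1}'$; faithfulness is then immediate since the data for $j\le n$ already determines $(f,\phi)$.

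Next I would verify that objects in the image satisfy (a), (b), (c): (a) is clear from $H_\et$ having level $\le n$; (b) and the identity transition maps are built in; and (c) is precisely the pullback square defining $V_{n+1}'$ together with the triangle defining $(h')^o=\alpha$. The rows are exact: the top row is tautological, and the middle row $0\to F^nH_\C\to V_{n+1}'\to V_n\to 0$ is exact because pulling back the surjection $H_\C\to H_\C/F^n$ along $\pi_n$ yields a surjection with the same kernel $F^nH_\C$ (here one uses that $\pi_n$ is surjective, which is the Remark after Definition \ref{def:fhsn}).

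For the converse — that any $(H,V)$ in ${\sf FHS}_{n+k}$ satisfying (a)--(c) is isomorphic to something in the image, and that this identification is compatible with morphisms — I would argue as follows. Given such an object, (a) gives $H_\C/F^{n+i}=H_\C$, and (b) collapses the top $k$ stages of $V$ to $V_{n+1}$; I then show $V_{n+1}\to V_n\times_{H_\C/F^n}H_\C$ is an isomorphism by the snake lemma applied to the exact rows in (c) (this is the $n=2$ computation in Proposition \ref{prp:veciota} in disguise), and similarly that $h^o$ factors as $H^o\xrightarrow{\alpha}V_{n+1}\to V_n$ forces $(h')^o$ to be the fiber-product map. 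Thus $(H,V)\iso \iota_k(H,V_{\le n})$ where $V_{\le n}$ is the truncation. Finally, the morphisms in ${\sf FHS}_{n+k}$ compatible with diagram (c) are, via these isomorphisms, exactly arbitrary morphisms in ${\sf FHS}_n$: a compatible morphism is determined by its restriction to $H$ and to $V_1,\dots,V_n$, and conversely any ${\sf FHS}_n$-morphism extends uniquely by the universal property — so $\iota_k$ is full onto this subcategory.

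The main obstacle I expect is bookkeeping rather than mathematics: checking that the iterated construction is well-defined and functorial up to canonical isomorphism (so that $\iota_k$ really is a functor and not merely defined object-by-object), and carefully matching the exactness in (c) with surjectivity of the $\pi_i$. The snake-lemma step identifying $V_{n+1}$ with the fiber product is the one place where a genuine (if short) argument is needed; everything else is unwinding definitions and universal properties, exactly as in Example \ref{exm:iotak} and Proposition \ref{prp:veciota}.
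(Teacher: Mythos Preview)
Your overall plan---build $\iota_k$ from the fiber-product recipe of Example~\ref{exm:iotak} and then exhibit a quasi-inverse---is exactly the paper's approach. But there is a genuine gap in how you treat the connected part $H^o$, and it propagates through the rest of the argument.

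In Example~\ref{exm:iotak} (and hence in $\iota_1$) the connected component is \emph{not} preserved: one sets $(H')^o:=H^o\times_{V_n}V'_{n+1}$, so that there is an exact sequence $0\to F^nH_\C\to (H')^o\to H^o\to 0$. Your description ``define the augmentation $(h')^o\colon H^o\to V_{n+1}'$'' keeps $H^o$ fixed and tries to produce a map into $V_{n+1}'=V_n\times_{H_\C/F^n}H_\C$; but the universal property requires a map $H^o\to H_\C$ compatible with $h^o$, and no such map exists in general (it would force $\pi_n\circ h^o=0$, i.e.\ $(H,V)$ special). Relatedly, the map $\alpha$ in condition~(c) is the arrow $F^nH_\C\to H^o$ \emph{of the level $\le n+k$ structure}, recording the inclusion $F^nH_\C\hookrightarrow (H')^o$ just described; it is not the augmentation $(h')^o$.

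Because of this, your proposed quasi-inverse ``$(H,V_{\le n})$'' is wrong: if you simply truncate and then apply $\iota_1$, the connected part grows by $F^nH_\C$ and you do not recover the original object. The paper's quasi-inverse instead takes $H^o_{\text{new}}:=H^o/\alpha(F^nH_\C)$ (with $V_i$ unchanged for $i\le n$ and augmentation $v_{n+1}\circ h$), which exactly undoes the fiber-product enlargement. Once you correct the handling of $H^o$ in both directions, the remaining verifications (faithfulness, the snake-lemma identification of $V_{n+1}$ with the fiber product, and the description of morphisms compatible with~(c)) go through as you outline.
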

	\begin{proof}
		The construction of $\iota_k$ is a generalization of that in \ref{exm:iotak}. We have $\iota_k=\iota_1\circ \iota_{k-1}$, hence it is enough to define $\iota_1$ which is the same as in \ref{exm:iotak} up to a change of subscripts: $n=1$, $n+1=2$.
		
		To prove the equivalence we define a quasi-inverse: Let $(H',V')\in {\sf FHS}_{n+1}$ and satisfying $a,b,c$ and $\alpha :F^nH_\C'\to (H')^o$ as in the proposition.\\ Define $(H,V)\in {\sf FHS}_n $ in the following way: $H=H'/\alpha(F^nH_\C')$; $V_i=V_i'$ for all $1\le i\le n$; $h: H'/\alpha(F^nH_\C') \stackrel{\bar{h'}}{\longrightarrow} V_{n+1}' \stackrel{v_{n+1}'}{\longrightarrow} V_n'=V_n$, where $\bar{h'}=(h_\et',(h')^o \mod F^n)$.
	\end{proof}
	\begin{prp}
		Let $n,k>0$ and denote by $\iota_k{\sf FHS}_n\subset {\sf FHS}_{n+k}$ the essential image of ${\sf FHS}_n$ (See the previous proposition). Then $\iota_k{\sf FHS}_n\subset {\sf FHS}_{n+k}$ is an  abelian  (not full) sub-category closed under kernels, cokernels and extensions.  
	\end{prp}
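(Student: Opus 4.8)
The plan is to deduce everything from Proposition~\ref{prp:difflevels}. That proposition presents $\iota_k$ as an equivalence of ${\sf FHS}_n$ onto $\iota_k{\sf FHS}_n$; since ${\sf FHS}_n$ is abelian (\ref{prp:fhsproperties}(i)) so is $\iota_k{\sf FHS}_n$, and what remains is to show that the kernel and cokernel of a morphism of the sub-category, and any extension in ${\sf FHS}_{n+k}$ of two of its objects, again satisfy conditions (a), (b), (c) of \ref{prp:difflevels} (up to isomorphism). I shall use freely that in ${\sf FHS}_{n+k}$ kernels, cokernels and finite direct sums are computed component-wise (\ref{prp:fhsproperties}(i)--(ii)), that morphisms of mixed Hodge structures are strictly compatible with $F$ and $W$, and that the functor $(H_\et,F,W)\mapsto H_\C/F^m$ is exact (the Lemma after \ref{prp:fhsproperties}).

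For kernels and cokernels I would first check that $\iota_k$ is an exact additive functor; as $\iota_k=\iota_1\circ\iota_{k-1}$ it is enough to do so for $\iota_1\colon{\sf FHS}_n\to{\sf FHS}_{n+1}$. Following the construction in \ref{exm:iotak}, every component of $\iota_1(H,V)$ is built from exact functors of $(H,V)$ — namely $H_\et$, $H^o$, $V_i$ ($1\le i\le n$), $H_\C$ and $H_\C/F^n$ — by forming the two fiber products $V'_{n+1}=V_n\times_{H_\C/F^n}H_\C=\Ker\!\left(V_n\oplus H_\C\to H_\C/F^n\right)$ and $(H')^o=H^o\times_{V_n}V'_{n+1}$; in each case the natural transformation whose kernel is taken is objectwise an epimorphism (because $H_\C\to H_\C/F^n$, resp.\ $v'_{n+1}$, is), so the nine lemma shows the resulting kernel functor is exact as well. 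Hence $\iota_1$ carries short exact sequences to componentwise-exact, hence exact, sequences in ${\sf FHS}_{n+1}$, and therefore $\iota_k$ is exact. Consequently for a morphism $u=\iota_k(\bar u)$ of $\iota_k{\sf FHS}_n$ one gets $\Ker u=\iota_k(\Ker\bar u)$ and $\Coker u=\iota_k(\Coker\bar u)$, which lie in $\iota_k{\sf FHS}_n$ and serve as kernel and cokernel there.

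For extensions, let $0\to A\to E\to B\to 0$ be exact in ${\sf FHS}_{n+k}$ with $A,B\in\iota_k{\sf FHS}_n$. Condition (a) for $E$ (that $E_\et$ is of level $\le n$) is immediate from strictness, since $A_\et$ and $B_\et$ are. For (b), the structure maps among the top $k$ components of $V^E$ sit between identity maps, hence are isomorphisms by the five lemma, and one replaces $E$ by an isomorphic object in which they are identities — precisely the trick used for ${\sf Vec}$ in the proof of \ref{prp:veciota}. For (c), the snake lemma applied to $v^E_{n+1}$ and the componentwise exact sequence $0\to A\to E\to B\to 0$ gives a short exact sequence $0\to F^nA_\C\to\Ker v^E_{n+1}\to F^nB_\C\to 0$ (using that $A\to E\to B$ are morphisms of formal Hodge structures and that $v^A_{n+1},v^B_{n+1}$ are surjective); one then checks, using the commutativity $\pi^E_n\,v^E_{n+1}=(\mathrm{pr})\,\pi^E_{n+1}$ of $E$'s diagram together with strictness, that $\pi^E_{n+1}$ maps $\Ker v^E_{n+1}$ injectively onto $F^nE_\C$, so that the exact-rows part of (c) holds. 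It remains to produce the $\C$-linear $\alpha^E\colon F^nE_\C\to E^o$ factoring the inclusion $F^nE_\C\hookrightarrow E_{V_{n+1}}$ through $(h^E)^o$; the natural attempt glues $\alpha^A$ and $\alpha^B$ along the (split, over $\C$) surjection $E^o\to B^o$. Once $E$ satisfies (a)--(c) the proposition follows.

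The real obstacle is the extension case, and within it the augmentation part of (c): unlike kernels, neither cokernels nor extensions respect the fiber-product squares that define $\iota_k{\sf FHS}_n$, so one cannot re-use the exactness of $\iota_k$ and must instead carry out the nine-, five- and snake-lemma arguments by hand and settle for the conclusion only up to isomorphism, exactly as in \ref{prp:veciota}. The fussiest point is to verify that the inclusion $\Ker v^E_{n+1}\cong F^nE_\C$ factors through $(h^E)^o$, i.e.\ that the map $\alpha^E$ exists; this is where the hypothesis that $A$ and $B$ lie in the sub-category (and that the morphisms are compatible with the diagrams (c)) has to be exploited carefully.
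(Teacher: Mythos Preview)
The paper's own proof is the single word ``Straightforward,'' so there is no detailed argument to compare against. Your treatment of kernels and cokernels---reducing to exactness of $\iota_1$, which follows because each new component is a fiber product along a surjection---is correct and is exactly the kind of verification the paper is tacitly invoking. For extensions, your reduction to conditions (a), (b) and the exact-row part of (c) via the five lemma and the snake lemma is also fine, and you have honestly isolated the one genuinely delicate step: producing $\alpha^E\colon F^nE_\C\to E^o$ with $(h^E)^o\circ\alpha^E$ equal to the inclusion $\Ker v^E_{n+1}\hookrightarrow V^E_{n+1}$.

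The gluing you propose does not close this gap, and the difficulty is intrinsic rather than technical. Choosing a lift $\alpha^E$ compatible with $\alpha^A,\alpha^B$ is always possible over $\C$, but the further requirement $(h^E)^o\circ\alpha^E=\iota$ leaves an obstruction in $\Hom_\C(F^nB_\C,\Coker h^o_A)$, and nothing in the ${\sf FHS}_{n+k}$-extension data forces it to vanish. Concretely, take $n=k=1$; let $A=\iota_1(0,\ \C)$ (so $A_\et=0$, $A^o=0$, $V^A_2=V^A_1=\C$, $v^A_2=\id$) and $B=\iota_1(B_0)$ with $B_{0,\et}=\Z$ pure of type $(1,1)$, $B_0^o=V^{B_0}_1=0$ (so $B^o=V^B_2=\C$, $V^B_1=0$, $h^o_B=\id$). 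For any $c,d\in\C$ one builds an extension $E\in{\sf FHS}_2$ with $V^E_2=\C^2$, $v^E_2(x,y)=x+cy$, $\pi^E_2(x,y)=y$, $h^o_E(z)=(dz,z)$. Then $\Ker v^E_2=\{(-cy,y)\}$ and $\img h^o_E=\{(dz,z)\}$; condition (c) demands $\Ker v^E_2\subset\img h^o_E$, i.e.\ $c+d=0$, and this quantity is an isomorphism invariant of $E$ since both subspaces are intrinsic. So for $c+d\neq 0$ the middle term lies outside $\iota_1{\sf FHS}_1$. In other words, the point you flagged as ``fussiest'' is not merely fussy: as the proposition is stated (closure under arbitrary extensions in ${\sf FHS}_{n+k}$), the $\alpha^E$ step cannot be completed in general, and one should expect an additional hypothesis such as $H^o=0$ (cf.\ the later remark on ${\sf FHS}_n^{prp}$) or speciality (cf.\ the $\tau_k$-version) for the argument to go through.
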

	\begin{proof}
		Straightforward.
	\end{proof}
	\begin{rmk}
	Note that $\iota_k{\sf FHS}_n\subset {\sf FHS}_{n+k}$ it is not closed  under sub-objects.
	\end{rmk}
	\begin{rmk}
		Let ${\sf FHS}_n^{prp}$ be the full sub-category of ${\sf FHS}_n$ whose objects are formal Hodge structures $(H,V)$ with $H^o=0$\footnote{The superscript $prp$ stands for proper. In fact the sharp cohomology objects (\ref{def:sharpcoho}) of a proper variety have this property.}. Then $\iota_k$ induces a full and faithful functor 
		\[
			\iota=\iota_k:{\sf FHS}_n^{prp}\rightarrow {\sf FHS}_{n+k}^{prp}
		\]
		
		Moreover $\iota_k{\sf FHS}_n^{prp}\subset {\sf FHS}_{n+k}^{prp}$ is an abelian thick sub-category.
	\end{rmk}
\begin{exm}[Special structures]\label{exm:tauk}
	For special structures it is natural to consider the following construction, similar to $\iota_k$ (Compare with \ref{exm:iotak}). Let $(H,V)$ be a formal Hodge structures of level $\le 1$. Define $\tau(H,V)=(H,V')$ to be the formal Hodge structure of level $\le 2$ represented by the following diagram
		\begin{equation*}
		\xymatrix{
	H_\et\ar[dr]_{}\ar[r]&H_\C\ar[r]^{h_\C}&H_\C/F^{1}\\
	H^o\ar[r]_{(h')^o}&V_{2}'\ar[u]_{\pi_{2}'}\ar[r]_{v_{2}'}&V_{1}\ar[u]_{\pi_{1}} }
		\end{equation*}
		where $V_2',\ v_2',\ (h')^o$ are defined via fiber product as follows
		\begin{equation*}
		\xymatrix{
		H^o \ar@/_/[ddr]_{h^o} \ar@/^/[drr]^{0}
		\ar@{.>}[dr]|-{(h')^o}             \\
		& V_2'\ar[d]_{v_2'} \ar[r]^{\pi_2'} &  H_\C\ar[d]^{} \\
		& V_1\ar[r]_{\pi_1} &H_\C/F^1 
		}
		\end{equation*}
		Note that the commutativity of the external square is equivalent to say that $(H,V)$ is special. Hence this construction cannot be used for general formal Hodge structures.
\end{exm}
\begin{prp}
	 Let $n,k>0$ integers. Then there exists a full and faithful functor
	\[
		\tau=\tau_k:{\sf FHS}^{s}_n\rightarrow  {\sf FHS}^{s}_{n+k} 
	\]
	Moreover the essential image of $\tau_k$, $\tau_k{\sf FHS}^{spc}_n$, is the full and thick abelian sub-category of ${\sf FHS}^{spc}_{n+k}$ with objects $(H,V)$ such that
	
		a) $H_\et$ is of level $\le n$. Hence $F^{n+1}H_\C=0$ and $F^{0}H_\C=H_\C$. 
		
		b) $V_{n+i}=V_{n+1}$ for $1\le i\le k$.
		
		c) $V_{n+1}=H_\C\x_{H_\C/F^n}V_n$.
\end{prp}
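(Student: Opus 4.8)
The plan is to follow the pattern of the proof of Proposition~\ref{prp:difflevels}, replacing the construction $\iota_k$ of Example~\ref{exm:iotak} by the construction $\tau_k$ of Example~\ref{exm:tauk}. Since $\tau_k=\tau_1\circ\cdots\circ\tau_1$ ($k$ times), it suffices to treat $\tau_1\colon{\sf FHS}^s_n\to{\sf FHS}^s_{n+1}$, which is the construction of Example~\ref{exm:tauk} up to the shift of subscripts $1\leadsto n$, $2\leadsto n+1$. Explicitly, for $(H,V)\in{\sf FHS}^s_n$ one sets $\tau_1(H,V)=(H,V')$ with $V'_i=V_i$, $\pi'_i=\pi_i$, $v'_i=v_i$ for $i\le n$, with $V'_{n+1}:=H_\C\x_{H_\C/F^n}V_n$ and $\pi'_{n+1}$, $v'_{n+1}$ the two projections, and with augmentation $(h')^o\colon H^o\to V'_{n+1}$ the map into the fiber product determined by $h^o\colon H^o\to V_n$ and the zero map $H^o\to H_\C$. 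This is legitimate precisely because $(H,V)$ is special, so that $\pi_n\circ h^o=0$, and it makes $\tau_1(H,V)$ again special since $\pi'_{n+1}\circ(h')^o=0$ and $H_\C/F^{n+1}=H_\C$. Functoriality is immediate from the universal property of the fiber product: a morphism $(f,\phi)$ keeps its components $f$ and $\phi_1,\dots,\phi_n$ and acquires $\phi_{n+1}\colon V'_{n+1}\to(V'_1)_{n+1}$ induced by $f_\C$ and $\phi_n$.

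Faithfulness of $\tau_1$ is clear, since the functor alters neither the formal group $H$ nor the spaces $V_1,\dots,V_n$. For fullness, let $(g,\psi)\colon\tau_1(H,V)\to\tau_1(H_1,W)$ be a morphism in ${\sf FHS}^s_{n+1}$. The compatibility conditions of Definition~\ref{def:fhsn} force $\psi_{n+1}$ to commute both with the projections to $H_\C$ (via $g_\C=\bar g_{n+1}$, using $F^{n+1}H_\C=0$) and with the projections to $V_n$ (via $\psi_n$, by commutativity of the relevant square in ${\sf Vec}_{n+1}$); by the universal property of the fiber product defining $W'_{n+1}$, the map $\psi_{n+1}$ is therefore exactly the one induced by $g_\C$ and $\psi_n$. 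Hence $(g,\psi_1,\dots,\psi_n)$ is a morphism in ${\sf FHS}^s_n$ whose image under $\tau_1$ is $(g,\psi)$; the remaining verifications (that $g_\et$ is a morphism of mixed Hodge structures, that the truncated diagrams commute, that the augmentations are respected) are inherited by restriction from the level $n+1$ data.

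To identify the essential image, first observe that $\tau_1(H,V)$ satisfies (a) (its \'etale part is $H_\et$, of level $\le n$) and (c) with $k=1$; (b) is vacuous for $k=1$. For $k\ge 2$ one uses that $F^{n+1}H_\C=0$, so $H_\C/F^j=H_\C$ for every $j\ge n+1$, and the fiber product defining the $(n+2)$-nd and higher terms degenerates to $H_\C\x_{H_\C}V_{n+1}=V_{n+1}$, which is exactly condition (b). Conversely, given $(H',V')\in{\sf FHS}^s_{n+k}$ satisfying (a), (b), (c), define $\sigma(H',V')=(H,V)\in{\sf FHS}^s_n$ by $H:=H'$, $V_i:=V'_i$, $\pi_i:=\pi'_i$, $v_i:=v'_i$ for $i\le n$, $h_\et\colon H_\et\to H_\C\to H_\C/F^n$, and $h^o:=v'_{n+1}\circ(h')^o\colon H^o\to V_n$; specialness of $(H',V')$ gives $\pi_n\circ h^o=0$, so $(H,V)$ is special. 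On morphisms $\sigma$ is restriction of components. Conditions (b) and (c) say precisely that $\tau_k\sigma\iso\id$, while $\sigma\tau_k\iso\id$ is built into the definition of $\tau_k$, so $\tau_k$ is an equivalence onto the full subcategory cut out by (a)--(c). Thickness of the essential image follows as in Proposition~\ref{prp:veciota} and the statement for $\iota_k{\sf FHS}_n$: kernels, cokernels and extensions in ${\sf FHS}_{n+k}$ are computed componentwise by Proposition~\ref{prp:fhsproperties}, and componentwise formation of these preserves both $F^{n+1}H_\C=0$ and the identities (b), (c).

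The one genuinely delicate point is closure under extensions. The argument is the one used in Proposition~\ref{prp:veciota}: writing $H_\C\x_{H_\C/F^n}V_n$ as the kernel of the surjection $H_\C\oplus V_n\twoheadrightarrow H_\C/F^n$, an extension in ${\sf FHS}^s_{n+k}$ of two objects satisfying (c) has, by the snake lemma applied level by level, its $(n+1)$-st space mapping isomorphically onto the corresponding fiber product; the same snake argument handles the repeated levels of (b). Everything else is bookkeeping parallel to Proposition~\ref{prp:difflevels}.
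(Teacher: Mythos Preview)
Your argument is correct and follows the same architecture as the paper's proof: reduce to $\tau_1$ via $\tau_k=\tau_1\circ\tau_{k-1}$, build $\tau_1$ as in Example~\ref{exm:tauk} with shifted subscripts, exhibit a quasi-inverse by truncation, and then verify thickness. Two small points are worth flagging.

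First, in your quasi-inverse $\sigma$ you write ``$h_\et\colon H_\et\to H_\C\to H_\C/F^n$''. This is not where $h_\et$ is supposed to land: the augmentation of an object of ${\sf FHS}_n$ has $h_\et\colon H_\et\to V_n$, not into $H_\C/F^n$. The correct definition (and the one the paper gives uniformly as $h=v'_{n+1}\circ h'$) is $h_\et:=v'_{n+1}\circ h'_\et$, parallel to your formula for $h^o$. This is a slip in writing, not a gap in the argument.

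Second, your thickness argument is a direct componentwise computation, invoking Proposition~\ref{prp:veciota} and the snake lemma to show that condition~(c), rewritten as $V_{n+1}=\Ker\bigl(H_\C\oplus V_n\twoheadrightarrow H_\C/F^n\bigr)$, is stable under extensions. The paper takes a slightly more conceptual route: it observes that the functors $(H,V)\mapsto H_\et$ and $(H,V)\mapsto V^o$ are exact, and that conditions (a)--(c) are really conditions on $H_\et$ (level $\le n$) and on $V^o$ (namely that $V^o_{n+i}\to V^o_n$ is an isomorphism for $1\le i\le k$, since $V^o_{n+1}=\Ker\pi'_{n+1}$ identifies with $V^o_n$ inside the fiber product). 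Thickness then follows from the thickness of ${\sf MHS}_n\subset{\sf MHS}_{n+k}$ and of $\iota_k{\sf Vec}_n\subset{\sf Vec}_{n+k}$ (Proposition~\ref{prp:veciota}). Your five-lemma/snake-lemma verification is doing exactly the same work one level lower; the paper's phrasing just packages it via the two exact forgetful functors.
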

\begin{proof}
	Note that $\tau_k=\tau_1\circ \tau_{k-1}$, hence is enough to construct $\tau_1$. Let $(H,V)$ be a special formal Hodge structure of level $\le n$, then $\tau_1(H,V)$ is defined as in \ref{exm:tauk}  up to change the sub-scripts $n=1,\ n+1=2$.
	
	 To prove the equivalence it is enough to construct a quasi-inverse of $\tau_1$. Let $(H',V')$ be a special formal Hodge structure of level $\le n$ satisfying the conditions $a,b,c$ of the proposition, then define $(H,V)\in {\sf FHS}_n$ as follows: $H:=H'$; $V_i:=V_i'$ for all $1 \le i\le n$; $h=v_{n+1}'\circ h'$.
	
	Thickness follows directly from the exactness of the functors
	\[
		(H,V)\mapsto H_\et\ ,\quad (H,V)\mapsto V^o\ .
	\]
\end{proof}
\begin{rmk}
	The functors $\tau_k, \iota_k$ agree on the full sub-category of ${\sf FHS}_n$ formed by $(H,V)$ with $H^o=0$.
\end{rmk}
\section{Extensions in ${\sf FHS}_n$}
\subsection{Basic facts}
\begin{exm} We describe the ext-groups for ${\sf Vec}_2$. We have the following isomorphism
		\[
			\phi:\Ext^1_{{\sf Vec}_2}(V,V')\stackrel{\sim}{\rightarrow} \Hom_{\sf Vec}(\Ker v,\Coker v')
		\]
		Explicitly $\phi$ associates to any extension class the $\Ker$-$\Coker$ boundary map of the snake lemma. To prove it is an isomorphism we argue as follows. The abelian category ${\sf Vec}_2$   is equivalent to the full sub-category $C'$ of $C^b({\sf Vec})$ of complexes concentrated in degree $0,1$. Hence the group of classes of extensions is isomorphic. Now let $a:A^0\to A^1$, $b:B^0\to B^1$ be two complexes of objects of $\sf Vec$. Then we have
		\[
			\Ext^1_{C'}(A^\bullet,B^\bullet)=\Ext^1_{C^b({\sf Vec})}(A^\bullet,B^\bullet)=\Hom_{D^b({\sf Vec})}(A^\bullet,B^\bullet[1])
		\]
		because $C'$ is a thick sub-category of  $C^b({\sf Vec})$. 

		The category $\sf Vec$ is of cohomological dimension $0$, then $a:A^0\to A^1$  is quasi-isomorphic to $\Ker a \stackrel{0}{\rightarrow}\Coker a$, similarly for $B^\bullet$. It follows that 
		\begin{align*}
				\Hom_{D^b({\sf Vec})}(A^\bullet,B^\bullet[1])=&\Hom_{D^b({\sf Vec})}(\Ker a[0]\oplus \Coker a[-1], \Ker b[1]\oplus \Coker b[0])\\
				&=\Hom_{\sf Vec}(\Ker a,\Coker b)\ .
		\end{align*}
		As a corollary we obtain that $\Ext^1_{{\sf Vec}_2}(V,-)$ is a right exact functor and this is a sufficient condition for the vanishing of $\Ext^i_{{\sf Vec}_2}(,-)$ for $i\le 2$ (i.e. ${\sf Vec}_2$ is a category of cohomological dimension $1$.).
		
\end{exm}
\begin{exm}
	 The category ${\sf Vec}_3$  is of cohomological dimension $1$. We argue as in \cite{extlaumot}. Let $V$ be an object of  ${\sf Vec}_3$, we define the following increasing filtration
	\[
		W_{-2}=\{0\to0\to V_1\}\ ;\ W_{-1}=\{0\to V_2\to V_1\}\ ;\ W_0=V
	\]
	Note that morphisms in  ${\sf Vec}_3$ are compatible w.r.t. this filtration. To prove that $\Ext^2_{ {\sf Vec}_3}(V,V')=0$ it is sufficient to show that $\Ext^2_{{\sf Vec}_3 }(\gr_i^WV,\gr_j^WV')=0$ for $i,j=-2,-1,0$ (just use the short exact sequences induced by $W$, cf. \cite[Proof of 2.5]{extlaumot}). We prove the case $i=0$, $j=-2$ leaving to the reader the other cases (which are easier, cf. \cite[2.2-2.4]{extlaumot}).\\
	Let $\gamma\in \Ext^2_{{\sf Vec}_3 }(\gr_0^WV,\gr_{-2}^WV')=0$, we can represent $\gamma$ by an exact sequence in ${\sf Vec}_3$ of the following type
	\[
		0\to \gr_{-2}^WV'\to A\to B\to \gr^W_0V\to 0
	\] 
	Let $C=\Coker(\gr_{-2}^WV'\to A)=\Ker(B\to \gr^W_0V)$, then $\gamma =\gamma_1\cdot \gamma_2$ where $\gamma_1\in \Ext^1_{{\sf Vec}_3 }(C,\gr_{-2}^WV')$, $\gamma_2\in \Ext^1_{{\sf Vec}_3 }(\gr_0^WV,C)$. Arguing as in \cite[2.4]{extlaumot} we can suppose that $C=\gr_{-1}^WC$, hence
	\[
		\gamma_1=[0\to\gr_{-2}^WV'\to A\to \gr_{-1}^WC\to 0]\ ,\  \gamma_2=[0\to\gr_{-1}^WC\to B\to \gr_{0}^WV\to 0]
	\]
	It follows that $A=\{0\to C_2\xrightarrow{f_1} V'_1\}$, $B=\{V_3\xrightarrow{f_2} C_2\to 0\}$ for some $f_1,f_2$. Now consider $D=\{V_3\xrightarrow{f_2}C_2\xrightarrow{f_1} V'_1\}\in {\sf Vec}_3$, then it is easy to check that
	\[
		\gamma_1=[0\to W_{-2}D\to W_{-1}D \to \gr^W_{-1}D\to 0]\ ,\ \gamma_2=[0\to \gr_{-1}D\to W_0D/W_{-2}D\to \gr_0^WD\to 0]
	\] 
	By \cite[Lemma 2.1]{extlaumot} $\gamma=0$.
\end{exm}
\begin{prp}
	Let $H_\et$ be a mixed Hodge structure of level $\le n$: we consider it as an \'etale formal Hodge structure. Let $(H',V')$  be be a formal Hodge structure of level $\le n$ (for $n>0$). Then
	
	i) There is a canonical isomorphism of abelian groups
	\[
		\Ext_{\sf MHS}^1(H_\et,H_\et')\iso \Ext_{{\sf FHS}_n}^1(H_\et,(H',V'/{V'}^o))\ .
	\]
	
	ii) For any $i\ge 2$ there is a canonical isomorphism 
	\[
		\Ext_{{\sf FHS}_n}^i(H_\et,(H',V'/{V'}^o))\iso \Ext_{{\sf FHS}_n}^i(H_\et,(H'^o,0))\ .
	\]
\end{prp}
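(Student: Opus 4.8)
The plan is to apply the functor $\Hom_{{\sf FHS}_n}(H_\et,-)$ to the second of the two canonical exact sequences of Proposition~\ref{prp:fhssplittings}(i), written for $(H',V')$:
\[
	0\to (H',V')_\et\to (H',V'/{V'}^o)\to ((H')^o,0)\to 0 ,
\]
and then to combine three facts: (a) the groups $\Ext^i_{{\sf FHS}_n}(H_\et,(H',V')_\et)$ coincide with $\Ext^i_{\sf MHS}(H_\et,H'_\et)$, being $\Ext^1_{\sf MHS}$ in degree $1$ and $0$ in degrees $\ge 2$; (b) $\Hom_{{\sf FHS}_n}(H_\et,((H')^o,0))=0$; (c) $\Ext^1_{{\sf FHS}_n}(H_\et,((H')^o,0))=0$. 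Granting these, the long exact $\Ext$-sequence attached to the displayed short exact sequence yields both (i) and (ii) at once.

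For (a) I would first check that $(H',V')_\et=(H'_\et,V'/{V'}^o)$ is nothing but $H'_\et$ regarded as an \'etale object: by the remark after Definition~\ref{def:fhsn} each $\pi_i'\colon V_i'\to H'_\C/F^i$ is surjective, so $V_i'/{V_i'}^o=V_i'/\Ker\pi_i'\iso H'_\C/F^i$, which is exactly the $V$-part attached to $H'_\et$ by the embedding of Proposition~\ref{prp:fhsproperties}(iii). Since that embedding is full and thick and ${\sf MHS}_n$ is extension-closed in $\sf MHS$ (an extension of mixed Hodge structures does not raise the level, by strictness of $F$ on the weight graded pieces), it follows that $\Ext^1_{{\sf FHS}_n}(H_\et,(H',V')_\et)=\Ext^1_{{\sf MHS}_n}(H_\et,H'_\et)=\Ext^1_{\sf MHS}(H_\et,H'_\et)$. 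For the higher groups I would invoke the full embedding $D({\sf MHS}_n)\hookrightarrow D({\sf FHS}_n)$ established above together with the fact that $\sf MHS$, hence ${\sf MHS}_n$, is of cohomological dimension $1$; this gives $\Ext^i_{{\sf FHS}_n}(H_\et,(H',V')_\et)=\Ext^i_{{\sf MHS}_n}(H_\et,H'_\et)=0$ for all $i\ge 2$.

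For (b): a morphism $H_\et\to ((H')^o,0)$ in ${\sf FHS}_n$ is zero on the underlying formal groups, since the source has trivial connected part and the target has trivial \'etale part, and it is then zero on the $V$-parts because the $V$-part of $((H')^o,0)$ is zero. For (c): given an extension $0\to ((H')^o,0)\to (E,V_E)\to H_\et\to 0$ in ${\sf FHS}_n$, computing kernels and cokernels componentwise (Proposition~\ref{prp:fhsproperties}) forces $E^o\iso (H')^o$, $E_\et\iso H_\et$ in $\sf MHS$, and $(V_E)_i\iso H_\C/F^i$ compatibly with the maps of the sequence, so the only remaining data are the maps $\pi_i^E$ and the augmentation of $(E,V_E)$; but the requirement that the inclusion $((H')^o,0)\to (E,V_E)$ and the projection $(E,V_E)\to H_\et$ be morphisms of formal Hodge structures pins these down uniquely, in particular the component $H^o\to (V_E)_n$ of the augmentation must vanish, so that $(E,V_E)$ is canonically the direct sum $((H')^o,0)\oplus H_\et$ with its tautological inclusion and projection. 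Hence every such extension is split, and $\Ext^1_{{\sf FHS}_n}(H_\et,((H')^o,0))=0$.

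Finally I would read off the conclusion from the long exact sequence of $\Ext_{{\sf FHS}_n}(H_\et,-)$. In degree $1$, (b) kills the source of the connecting map into $\Ext^1(H_\et,(H',V')_\et)$ and (c) kills the term $\Ext^1(H_\et,((H')^o,0))$, so $\Ext^1_{{\sf FHS}_n}(H_\et,(H',V')_\et)\iso\Ext^1_{{\sf FHS}_n}(H_\et,(H',V'/{V'}^o))$; composing with (a) gives (i). In degrees $i\ge 2$, (a) makes both neighbouring terms $\Ext^i_{{\sf FHS}_n}(H_\et,(H',V')_\et)$ and $\Ext^{i+1}_{{\sf FHS}_n}(H_\et,(H',V')_\et)$ vanish, so $\Ext^i_{{\sf FHS}_n}(H_\et,(H',V'/{V'}^o))\iso\Ext^i_{{\sf FHS}_n}(H_\et,((H')^o,0))$, which is (ii); and all these isomorphisms are functorial, being induced by the canonical exact sequence and the canonical embeddings. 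The only place where an input from outside is needed is the cohomological dimension of $\sf MHS$ (and the previously established derived full embedding); the elementary componentwise analysis behind (b) and (c) is the only genuinely hands-on part, and verifying the rigidity of the extension in (c) — that the augmentation component $H^o\to (V_E)_n$ is forced to be $0$ — is the small obstacle to watch.
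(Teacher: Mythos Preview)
Your approach is exactly the paper's: apply $\Hom_{{\sf FHS}_n}(H_\et,-)$ to the second canonical short exact sequence of Proposition~\ref{prp:fhssplittings}(i) and read off the result from the long exact sequence. The paper leaves the verification of your facts (a), (b), (c) implicit, whereas you spell them out; the only cosmetic slip is that in (c) the augmentation component you want to show vanishes is $E^o=(H')^o\to (V_E)_n$, not ``$H^o\to (V_E)_n$'' (since $H=H_\et$ has no connected part), and your inference ``$\sf MHS$ has cohomological dimension $1$, hence so does ${\sf MHS}_n$'' is used without further comment, just as in the paper.
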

\begin{proof}
	This follows easily by the computation of the long exact sequence obtained applying $\Hom_{\sf FHS_n}(H_\Z,-)$ to the short exact sequence
	\[
		0\to (H',V')_{\rm et}\to (H',V')_\x\to (H'^o,0)\to 0 \ .
	\]
\end{proof}
\begin{prp}\label{prp:extfhs}
	The forgetful functor $(H,V)\mapsto H_\et$ induces a  surjective morphism of abelian groups
\[
	\gamma:\Ext_{{\sf FHS}_n}^1((H,V),(H',V'))\rightarrow \Ext_{\sf MHS}^1(H_\et,H_\et')
\]
for any $(H,V),\ (H',V')$ with $H_\et,H_\et'$ free.
\end{prp}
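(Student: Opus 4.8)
The plan is to exhibit an explicit section (of sets) of $\gamma$ at the level of extension classes, using the chosen splitting of formal groups $H = H^o \times H_\et$ and the canonical decomposition of Proposition \ref{prp:fhssplittings}. First I would observe that since $H_\et$, $H_\et'$ are free (hence projective as $\Z$-modules, and in any case the relevant $\Ext^1_{\sf MHS}$ is computed by a two-term complex à la Carlson), every class $e \in \Ext^1_{\sf MHS}(H_\et, H_\et')$ is represented by a genuine extension of mixed Hodge structures
\[
	0 \to H_\et' \to E_\et \to H_\et \to 0 .
\]
The goal is to fatten this up to an extension in ${\sf FHS}_n$ of $(H,V)$ by $(H',V')$ that maps to $e$ under $\gamma$.

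Next I would build the ${\sf Vec}_n$-component and the augmentation. Set $E^o := H^o \times (H')^o$ and, for each $1 \le i \le n$, choose the vector space $\widetilde{V}_i$ fitting into
\[
	0 \to V_i' \to \widetilde{V}_i \to V_i \to 0 ,
\]
taking the split extension $\widetilde V_i = V_i' \oplus V_i$ — permissible because $\sf Vec$ has cohomological dimension $0$, so there is no obstruction and the maps $v_i$ assemble into an object $\widetilde V \in {\sf Vec}_n$. The subtlety is to make the projections $\pi_i \colon \widetilde V_i \to E_\C / F^i$ and the augmentation $h \colon E \to \widetilde V_n$ commute with everything. Here $E_\C/F^i$ sits in $0 \to H_\C'/F^i \to E_\C/F^i \to H_\C/F^i \to 0$ (strictness of morphisms of MHS w.r.t. $F$, as used in the proof of Proposition \ref{prp:fhsproperties}(iii)), so on the split vector space $\widetilde V_i = V_i' \oplus V_i$ one can simply take $\pi_i = \pi_i' \oplus \pi_i$ up to the identifications, and similarly define $h$ on $E^o = H^o \times (H')^o$ by $h^o \oplus (h')^o$ and on $E_\et$ by lifting $h_\et$ through the chosen splitting of $E_\et \to H_\et$. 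One checks the big diagram of Definition \ref{def:fhsn}(iii) commutes; this produces an object $(E, \widetilde V) \in {\sf FHS}_n$ together with morphisms $(H',V') \to (E,\widetilde V) \to (H,V)$ whose composite is zero and which are kernel/cokernel of each other (verify componentwise, using Proposition \ref{prp:fhsproperties}(i)–(ii) that (co)kernels in ${\sf FHS}_n$ are computed componentwise). By construction $\gamma$ of this class is $e$, giving surjectivity. That $\gamma$ is a homomorphism of abelian groups is formal: it is induced by the exact forgetful functor $(H,V) \mapsto H_\et$ (Proposition \ref{prp:fhsproperties}(ii), composed with ${\sf FHS}_{n,\et} \simeq {\sf MHS}_n$), and exact functors induce maps on $\Ext^1$ compatible with the Baer sum.

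The main obstacle I anticipate is the compatibility bookkeeping in the middle step: one must check that the splitting choices (of $E_\et \to H_\et$ as abelian groups, of $\widetilde V_i = V_i' \oplus V_i$) can be made simultaneously so that the augmentation square $h, \pi, v$ and the projection $\pi_i \colon \widetilde V_i \to E_\C/F^i$ all commute on the nose, not just up to the relevant subquotients. The point is that no genuine obstruction can appear — each required lift lives in a $\Hom$ or $\Ext^1$ over $\sf Vec$ or involves only the already-chosen MHS extension $E_\et$ — but writing down a coherent choice requires care. An alternative, cleaner route that avoids explicit diagram chasing: apply $\Hom_{{\sf FHS}_n}(-, (H',V'))$ to the canonical exact sequence $0 \to (0, V^o) \to (H,V) \to (H,V)_\x \to 0$ and $\Hom_{{\sf FHS}_n}((H,V),-)$ to $0 \to (H',V')_\et \to (H',V')_\x \to ((H')^o,0) \to 0$ from Proposition \ref{prp:fhssplittings}, reduce to the case $(H,V) = H_\et \in {\sf MHS}_n$ and $(H',V') = (H',V'/{V'}^o)$ handled in the previous proposition, and invoke that $\Ext^1_{\sf MHS}(H_\et, H_\et') \iso \Ext^1_{{\sf FHS}_n}(H_\et, (H', V'/{V'}^o))$ together with right-exactness of the long exact sequences; surjectivity of $\gamma$ then drops out of the naturality of these identifications. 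I would present the explicit-extension argument as the main proof and mention the homological-algebra route as a remark.
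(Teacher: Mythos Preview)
Your proposal is correct and follows essentially the same approach as the paper: both construct a lift of a given MHS extension by taking direct sums on the formal and ${\sf Vec}_n$ components. The only difference is cosmetic --- the paper invokes the Carlson normal form explicitly (writing the extension as $(H_\et'\oplus H_\et,W,F_\theta)$ and observing $\tilde H_\C/F^i_\theta=H_\C'/F^i\oplus H_\C/F^i$), which amounts precisely to the compatible choice of splittings you describe; your alternative homological-algebra route via Proposition~\ref{prp:fhssplittings} is not in the paper but is a reasonable remark.
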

\begin{proof}
Recall the extension formula for mixed Hodge structures is  (see \cite[I \S 3.5]{peters-steenbrink})
\begin{equation}\label{eq:extmhs}
	\Ext_{\sf MHS}^1(H_\et,H_\et')\iso \frac{W_0\iHom(H_\et,H_\et')_\C}{F^0\cap W_0 (\iHom(H_\et,H_\et')_\C)+ W_0\iHom(H_\et,H_\et')_\Z}
\end{equation}
		more precisely we get that any extension class can be represented  by $\tilde{H}_\et=(H_\et'\oplus H_\et,W,F_\theta)$ where the weight filtration is the direct sum $W_i H_\et'\oplus W_i H_\et$ and $F^i_\theta:=F^i H_\et'+ \theta(F^i H_\et) \oplus F^i H_\et$, for some $\theta\in W_0\iHom(H_\et,H_\et')_\C$. It follows that $\tilde{H}_\C/F^i_\theta=H_\C'/F^i\oplus H_\C/F^i$. Then we can consider the formal Hodge structure of level $\le n$ $(\tilde{H},\tilde{V})$ defined as follows: $\tilde{H}_\et=(H_\et'\oplus H_\et,W,F_\theta)$ as above; $\tilde{H}^o:=(H')^o\oplus H^o$; $\tilde{V}_i:=V_i'\oplus V_i$, $\tilde{v}_i:=(v_i',v_i)$; $\tilde{h}=(h',h)$. Then it easy to check that $(\tilde{H},\tilde{V})\in \Ext^1_{{\sf FHS}_n}((H',V'),(H,V))$ and $\gamma(\tilde{H},\tilde{V})=(H_\et'\oplus H_\et,W,F_\theta)$.
\end{proof}
\begin{exm}[Infinitesimal deformation] Let $f:\widehat{X}\to \Spec \C[\epsilon]/(\epsilon^2)$ a smooth and projective morphism. Write $X/\C$ for the smooth and projective variety corresponding to the special fiber, i.e. the fiber product
	\begin{equation*}
	\xymatrix{
	X \ar[d]_{}\ar[r]^{}&    \widehat{X}\ar[d]^{f}\\
	\Spec \C\ar[r]_{} & \Spec \C[\epsilon]/(\epsilon^2)  }
	\end{equation*}
	then (see  \cite[2.4]{bloch.srinivas:ehs}) for any $i,n$ there is a commutative diagram with exact rows
	\begin{equation*}
	\xymatrix{
	0\ar[r]& \h^{n-i+1}(X_{\rm an},\Omega^{i-1})\ar[d]^0\ar[r]&\h^n(\widehat{X}_{\rm an},\Omega^{<i})\ar[d]\ar[r]& \h^n(X_{\rm an},\C)/F^{i}\ar[d] \ar[r]&0\\
	0\ar[r]& \h^{n-i+2}(X_{\rm an},\Omega^{i-2})\ar[r]&\h^n(\widehat{X}_{\rm an},\Omega^{<i-1})\ar[r]& \h^n(X_{\rm an},\C)/F^{i-1} \ar[r]&0 }
	\end{equation*}

	Hence there is an extension of formal Hodge structures of level $\le n$
	\[
		0\to (0, V)\to (\h^n(X),\h^{n,*}_{\rm dR}(\widehat{X})) \to \h^n(X)\to 0
	\]
	 with $V_i=\h^{n-i+1}(X_{\rm an},\Omega^{i-1})$ and $v_i=0$.
\end{exm}
\begin{rmk}
	It is well known that the groups $\Ext^i(A,B)$ vanish in category of mixed Hodge structures for any $i>1$. It is natural to ask if the groups $\Ext^i_{\sf FHS_n}((H,V),(H',V'))$ vanish for $i>n$ (up to torsion).
	In particular Bloch and Srinivas raised a similar question for special formal Hodge structures (cf. \cite{bloch.srinivas:ehs}).
	 
	The author  answered positively this question for $n=1$ in \cite{extlaumot}.
\end{rmk}
\subsection{Formal Carlson theory}
\begin{prp}
	 Let $A,B$ torsion-free mixed Hodge structures. Suppose $B$ pure of weight $2p$ and $A$ of weights $\le 2p-1$. There is a commutative diagram of complex Lie group
	\begin{equation*}
	\xymatrix{
	  \Ext^1_{\sf MHS}(B,A) \ar[dr]_{i^*}\ar[r]^\gamma&\Hom_{\Z}(B_\Z^{p,p},J^p(A))\\
	& \Ext^1_{\sf MHS}(B_\Z^{p,p},A)\ar[u]^{\bar{\gamma}}}
	\end{equation*}
	where $\bar{\gamma}$ is an isomorphism;  $i^*$ is the surjection induced by the inclusion $i:B_\Z^{p,p}\to B$. 
\end{prp}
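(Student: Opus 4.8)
The plan is to read the map $\gamma$ as the composite $\bar\gamma\circ i^*$: it sends an extension class $\xi\in\Ext^1_{\sf MHS}(B,A)$ to the homomorphism which assigns to a map $b\colon\Z(-p)\to B$ with image in $B_\Z^{p,p}$ the Carlson invariant $\bar\gamma(b^*\xi)\in J^p(A)$ of the pulled–back extension. With this reading the triangle commutes tautologically, and the substance of the proposition is the surjectivity of $i^*$, the fact that $\bar\gamma$ is an isomorphism, and that all three maps are morphisms of complex Lie groups. First I would record a purely lattice–theoretic point: $B_\Z^{p,p}=B_\Z\cap B^{p,p}$ is \emph{saturated} in $B_\Z$, because if $nv\in B^{p,p}$ with $v\in B_\Z$ then $v\in B^{p,p}$ (as $B^{p,p}$ is a $\C$–subspace of $B_\C$). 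Hence $B_\Z^{p,p}$ is a sub–mixed Hodge structure of $B$, pure of type $(p,p)$; choosing a $\Z$–basis gives an isomorphism $B_\Z^{p,p}\iso\Z(-p)^{\oplus r}$ in $\sf MHS$ with $r=\dim_\C B^{p,p}$, and the quotient $C:=B/B_\Z^{p,p}$ is again a torsion–free mixed Hodge structure, pure of weight $2p$.

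For $\bar\gamma$ I would feed $\Ext^1_{\sf MHS}(\Z(-p)^{\oplus r},A)$ into the formula \eqref{eq:extmhs}. One has $\iHom(\Z(-p)^{\oplus r},A)\iso A(p)^{\oplus r}$, and the weight hypothesis on $A$ (weights $\le 2p-1$) forces $A(p)$ to have weights $\le -1$, so $W_0$ of this internal Hom is the whole of it; moreover $F^0A(p)_\C=F^pA_\C$. Thus \eqref{eq:extmhs} collapses to
\[
\Ext^1_{\sf MHS}(\Z(-p)^{\oplus r},A)\iso\Bigl(A_\C/(F^pA_\C+A_\Z)\Bigr)^{\oplus r}=J^p(A)^{\oplus r}=\Hom_\Z(B_\Z^{p,p},J^p(A)).
\]
Since both sides are quotients of a finite–dimensional $\C$–vector space by a finitely generated subgroup, and the identification is induced by $\C$–linear maps carrying the integral lattices to one another, this is an isomorphism of complex Lie groups. (Naturality in the basis is automatic, the isomorphism being a Yoneda/universal–property identification.)

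For surjectivity of $i^*$ I would apply $\Hom_{\sf MHS}(-,A)$ to the short exact sequence $0\to B_\Z^{p,p}\to B\to C\to 0$, obtaining the exact piece
\[
\Ext^1_{\sf MHS}(B,A)\xrightarrow{\,i^*\,}\Ext^1_{\sf MHS}(B_\Z^{p,p},A)\longrightarrow\Ext^2_{\sf MHS}(C,A),
\]
and then invoke the vanishing $\Ext^2_{\sf MHS}(C,A)=0$ — the category of mixed Hodge structures has cohomological dimension $1$, as recalled in the remark above and in \cite{carlson,peters-steenbrink}. That $i^*$ is holomorphic is clear, since it is induced by the restriction morphism of mixed Hodge structures $\iHom(B,A)\to\iHom(B_\Z^{p,p},A)$, which transports the Carlson data of the source ($C$–linearly, and compatibly with lattices) to that of the target. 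I do not expect a serious obstacle here; the only points requiring attention are that $B_\Z^{p,p}$ be saturated (so that $C$ is an honest mixed Hodge structure and the long exact sequence above is available), and that the weight bound on $A$ be used exactly where it is needed in \eqref{eq:extmhs}, namely to make the weight $\ge 0$ part of the internal Hom disappear so that the target of $\bar\gamma$ is precisely $J^p(A)$ and not a further quotient of it.
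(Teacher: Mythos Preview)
Your proof is correct and follows essentially the same route as the paper: both invoke the Carlson formula \eqref{eq:extmhs} and a choice of $\Z$-basis for $B_\Z^{p,p}$ to identify $\bar\gamma$ with the diagonal isomorphism onto $J^p(A)^{\oplus r}$, and your reading of $\gamma$ as $b\mapsto b^*\xi$ is exactly the boundary-map description the paper gives in the remark following the proposition. The one place you add something the paper leaves implicit is the surjectivity of $i^*$: you obtain it from the long exact sequence and $\Ext^2_{\sf MHS}=0$, whereas the paper's ``follows easily from the explicit formula'' presumably means reading it off directly from \eqref{eq:extmhs} (the restriction $\Hom_\C(B_\C,A_\C)\to\Hom_\C((B_\Z^{p,p})_\C,A_\C)$ is surjective, and under the weight hypotheses $W_0$ is everything on both sides). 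Either argument is fine; yours is slightly more conceptual, the paper's slightly more hands-on, and they cost the same.
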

\begin{proof}
	This follows easily from the explicit formula \ref{eq:extmhs}. The construction of $\gamma,\ \bar{\gamma}$ is given in the following remark. Then choosing a basis of $B^{p,p}_\Z$ it is easy to check that $\bar{\gamma}$ is an isomorphism. 
\end{proof}
\begin{rmk}
	 i) Let $\{b_1,...,b_n\}$ a $\Z$-basis of $B_\Z^{p,p}$, then $\Hom_{\Z}(B_\Z^{p,p},J^p(A))\iso \oplus_{i=1}^n J^p(A)$ which is a complex Lie group.

	ii) Explicitly $\gamma$ can be constructed as follows. Let $x\in \Ext^1_{\sf MHS}(B,A)$ represented by the extension \[
		0\to A\to H\to B\to 0
	\]
	then apply $\Hom_{\sf MHS}(\Z(-p),-)$ to the above exact sequence and consider the boundary of the associated long exact sequence
	\[
		\cdots \to \Hom_{\sf MHS}(\Z(-p),B) \stackrel{\partial_x}{\longrightarrow} \Ext^1_{\sf MHS}(\Z(-p),A)\to \cdots
	\]
	Note that $\partial_x$ does not depend on the choice of the representative of $x$; $\Hom_{\sf MHS}(\Z(-p),B)=B^{p,p}_\Z$; $J^p(A)=\Ext^1_{\sf MHS}(\Z(-p),A)$.

	Hence we can define $\gamma(x):=\partial_x\in \Hom_{\Z}(B_\Z^{p,p},J^p(A))$.

	iii) If the complex Lie group $J^p(A)$ is algebraic  then $\Hom_{\Z}(B_\Z^{p,p},J^p(A))$ can be identified with set of one motives of type
	\[
		u:B_\Z^{p,p}\rightarrow J^p(A)
	\]
	
\end{rmk}
\begin{dfn}[formal-p-Jacobian]
	Let $(H,V)$ be a formal Hodge structure of level $\le n$. Assume $H_\et$ is  a torsion free mixed Hodge structure. For $1\le p\le n$ the \dfni{$p$-th formal Jacobian of $(H,V)$} is defined
	as%
	$$
	J^p_\sharp(H,V):= V_p/H_\et. 
	$$
	where $H_\et$ acts on $V_p$ via the augmentation $h$. By construction there is an extension of abelian groups
	$$
	0\to V_p^0\to J^p_\sharp(H,V)\to J^p(H,V)\to 0
	$$
	where we define $J^p(H,V):=J^p(H_\et)=H_\C/(F^p+H_\et)$.
\end{dfn}
 
Note that that $J^p_\sharp(H,V)$ is a complex Lie group if the weights of $H_\et$ are $\le 2p-1$.
\begin{prp}\label{prp:genext}
	There is an extension of abelian groups
	\[
		0\to V_p^o\to \Ext^1_{{\sf FHS}_{p}}(\Z(-p),(H,V))\to \Ext^1_{\sf MHS} (\Z(-p),H_\et)\to 0
	\]
	for any $(H,V)$ formal Hodge structure of level $\le p+1$. In particular  if $H_\et$ has weights $\le 2p-1$ there is an extension
	\begin{equation}\label{eq:genext}
		0\to V_p^o\to \Ext^1_{{\sf FHS}_{p}}(\Z(-p),(H,V))\to J^p(H_\et)\to 0\ .
	\end{equation}
\end{prp}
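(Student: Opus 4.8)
The plan is to read off the sequence from the long exact cohomology sequence obtained by applying $\Hom_{{\sf FHS}_p}(\Z(-p),-)$ to the first canonical short exact sequence of Proposition~\ref{prp:fhssplittings},
\[
0\to (0,V^o)\to (H,V)\to (H,V)_\x\to 0\,.
\]
I carry out the argument for $(H,V)\in{\sf FHS}_p$; for an object of level $\le p+1$ one first discards the top datum $V_{p+1}$, which the \'etale object $\Z(-p)$ does not see.

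The argument rests on two computations. First,
\[
\Ext^1_{{\sf FHS}_p}(\Z(-p),(0,V^o))\iso V_p^o\,.
\]
Indeed $\Z(-p)_\C$ is concentrated in bidegree $(p,p)$, so $F^i\Z(-p)_\C=\Z(-p)_\C$ for every $1\le i\le p$ and the ${\sf Vec}_p$-component of $\Z(-p)$ vanishes in ${\sf FHS}_p$; since the forgetful functors to ${\sf FrmGrp}$ and to ${\sf Vec}_p$ are exact (Proposition~\ref{prp:fhsproperties}), an extension of $\Z(-p)$ by $(0,V^o)$ has split underlying extensions of formal groups and of ${\sf Vec}_p$-objects, hence is determined by the image under the augmentation of a generator of $\Z(-p)$, and one checks that the fhs-compatibility imposes no further condition because the relevant quotient $E_\C/F^p$ is zero. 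Second,
\[
\Ext^1_{{\sf FHS}_p}(\Z(-p),(H,V)_\x)\iso\Ext^1_{\sf MHS}(\Z(-p),H_\et)\,.
\]
For this, apply $\Hom_{{\sf FHS}_p}(\Z(-p),-)$ to the other canonical sequence $0\to(H,V)_\et\to(H,V)_\x\to(H^o,0)\to 0$; since there are no non-zero morphisms or extensions in ${\sf FHS}_p$ from the \'etale object $\Z(-p)$ into the purely connected object $(H^o,0)$, one obtains $\Ext^1_{{\sf FHS}_p}(\Z(-p),(H,V)_\x)\iso\Ext^1_{{\sf FHS}_p}(\Z(-p),(H,V)_\et)$, and as $(H,V)_\et$ is the image of $H_\et$ under the full and thick embedding ${\sf MHS}_p\hookrightarrow{\sf FHS}_p$ of Proposition~\ref{prp:fhsproperties}, the latter $\Ext^1$ coincides with $\Ext^1_{\sf MHS}(\Z(-p),H_\et)$.

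Plugging these into the long exact sequence for $0\to(0,V^o)\to(H,V)\to(H,V)_\x\to 0$ yields
\[
\cdots\to\Hom_{{\sf FHS}_p}(\Z(-p),(H,V)_\x)\xrightarrow{\delta}V_p^o\xrightarrow{j}\Ext^1_{{\sf FHS}_p}(\Z(-p),(H,V))\xrightarrow{\gamma}\Ext^1_{\sf MHS}(\Z(-p),H_\et)\to\cdots
\]
where $\gamma$ is precisely the map induced by the forgetful functor $(H,V)\mapsto H_\et$, hence surjective by Proposition~\ref{prp:extfhs} (both $\Z(-p)$ and $H_\et$ being torsion-free). This gives exactness on the right, and what remains is to show $\delta=0$, equivalently that $j$ is injective; this is the one point where I expect real work. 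Unwinding the identifications, $\Hom_{{\sf FHS}_p}(\Z(-p),(H,V)_\x)=\Hom_{\sf MHS}(\Z(-p),H_\et)$ is the group of weight-$2p$ Tate classes of $H_\et$ and $\delta$ sends a class $g$ to the composite $h_\et\circ g\in V_p^o$; hence $\delta$ vanishes exactly when the augmentation annihilates the $(p,p)$-Tate classes of $H_\et$. In particular $\delta=0$ whenever $H_\et$ has weights $\le 2p-1$, since then already $\Hom_{\sf MHS}(\Z(-p),H_\et)=0$. This last case gives the second assertion: for $H_\et$ of weights $\le 2p-1$ the extension formula \eqref{eq:extmhs} identifies $\Ext^1_{\sf MHS}(\Z(-p),H_\et)$ with $H_\C/(F^p+H_\et)=J^p(H_\et)$, so the sequence becomes \eqref{eq:genext}.
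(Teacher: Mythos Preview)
Your route is genuinely different from the paper's. The paper argues directly: it quotes Proposition~\ref{prp:extfhs} for the surjectivity of $\gamma$, then observes that an element of $\Ker\gamma$ is represented by an extension with split \'etale part, so that the middle object is $(H\times\Z(-p),V)$ with augmentation $h_\et'(x,z)=h_\et(x)+\theta(z)$ for some $\theta\colon\Z\to V_p^o$, and asserts that $\theta$ is well defined on the extension class. You instead run the long exact sequence for $\Hom_{{\sf FHS}_p}(\Z(-p),-)$ on the canonical decomposition $0\to(0,V^o)\to(H,V)\to(H,V)_\x\to 0$, compute the two outer $\Ext^1$ groups, and analyse the connecting map $\delta$.

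Your analysis is correct and in fact sharper. The identification $\delta(g)=h_\et(g(1))\in V_p^o$ is right, and you are right to hesitate: $\delta$ need not vanish in general. Take $H_\et=\Z(-p)$, $H^o=0$, $V_p=\C$ with $\pi_p=0$ and $h_\et(1)=1$, and $V_i=0$ for $i<p$. Then $V_p^o=\C$, $\Hom_{\sf MHS}(\Z(-p),H_\et)=\Z$, and $\delta$ is the inclusion $\Z\hookrightarrow\C$; since $\Ext^1_{\sf MHS}(\Z(-p),\Z(-p))=0$ one finds $\Ext^1_{{\sf FHS}_p}(\Z(-p),(H,V))\cong\C/\Z$, not $V_p^o=\C$. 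The paper's claim that ``$\theta$ does not depend on the representative'' overlooks exactly this: an isomorphism of extensions may translate the $\Z(-p)$-summand by a class $\alpha\in\Hom_{\sf MHS}(\Z(-p),H_\et)$, which shifts $\theta$ by $h_\et(\alpha(1))$. So the first displayed exact sequence of the proposition needs the extra hypothesis you isolated (no nonzero Hodge classes $\Z(-p)\to H_\et$, or at least that $h_\et$ kills them).

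Your proof of the second assertion \eqref{eq:genext} is complete: under the weight condition $\Hom_{\sf MHS}(\Z(-p),H_\et)=0$, hence $\delta=0$, and the Carlson formula identifies $\Ext^1_{\sf MHS}(\Z(-p),H_\et)$ with $J^p(H_\et)$. This is the only case actually used downstream (for both ${\rm ESV}$ and ${\rm FW}$), so nothing in the applications is affected.
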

\begin{proof}
	By \ref{prp:extfhs} there is a surjective map 
	\[
		\gamma :\Ext^1_{{\sf FHS}_{p}}(\Z(-p),(H,V))\to \Ext^1_{\sf MHS} (\Z(-p),H_\et)\ .
	\]
	Recall that $\Z(-p)$ is a mixed Hodge structure and here is considered as a formal Hodge structure of level $\le p$   represented by the following diagram
	\begin{equation*}
	\xymatrix{
	\Z\ar[dr]^{}\ar[r]^{}&0\ar[r]&\cdots\\
	  0\ar[r]_{h^o}&0\ar[u]\ar[r]&\cdots}
	\end{equation*}
	It follows directly from the definition of a morphism of formal Hodge structures that an element of $\Ker \gamma$ is a formal Hodge structure of the form $(H\x \Z(-p),H/F)$ represented by
		\begin{equation*}
		\xymatrix{		H_\et\x \Z\ar[dr]^{h_\et'}\ar[r]&H_\C/F^{n}\ar[r]&H_\C/F^{n-1}\ar[r]&\cdots\ar[r]&H_\C/F^1\\
	H^o\ar[r]_{h^o}&V_{n}\ar[u]^{\pi_{n}}\ar[r]_{v_{n}}&V_{n-1}\ar[u]^{\pi_{n-1}}\ar[r]_{v_{n-1}}&\cdots\ar[r]&V_1\ar[u]^{\pi_1}  }
		\end{equation*} 
	where the augmentation $h_\et'(x,z)=h_\et(x)+\theta (z)$ for some $\theta:\Z\to V_p^o$. The map $\theta$ does not depend on the representative of the class of the extension because $V_p$ and $\Z(-p)$ are fixed.
\end{proof}
\begin{exm}
	By the previous proposition for $p=1$ we get
	\[
		0\to V_1^o\to \Ext_{{\sf FHS}_1}^1(\Z(-1),(H,V))\to \Ext_{\sf MHS}^1(\Z(-1),H_\et) \to 0\ .
	\]
\end{exm}
\section{Sharp Cohomology}\label{cha:sc}
\begin{dfn}\label{def:sharpcoho}
	Let $X$ be a proper scheme over $\C$, $n>0$ and $1\le k\le n$. We define the \dfni{sharp cohomology object} $\h_\sharp^{n,k}(X)$ to be the $n$-formal Hodge structure represented by the following diagram
	\begin{equation*}
	\xymatrix{
	\h^n(X)\ar[dr]\ar[r]&\h^n(X)_\C/F^n\ar[r]&\cdots\ar[r]&\h^n(X)_\C/F^1\\
	&V^{n,k}_n(X)\ar[u]\ar[r]&\cdots\ar[r]&V^{n,k}_1(X)\ar[u]    }
	\end{equation*}
	where 
	\[
	V^{n,k}_i(X):=	\begin{cases}
	\h^{n,i}_{\rm dR}(X)& \text{if}\ 1\le i\le k\\
		\h^n(X)_\C/F^i\x_{\h^n(X)_\C/F^k}\h^{n,k}_{\rm dR}(X)& \text{if}\ k<i\le n
	\end{cases}
	\]
\end{dfn}

In the case $n=k$ we will simply write   $\h_\sharp^{n}(X)=\h_\sharp^{n,n}(X)$. This object  is represented explicitly by 
\begin{equation*}
\xymatrix{
\h^n(X_{\rm an},\Z)\ar[dr]\ar[r]&\h^n(X_{\rm an},\C)/F^n\ar[r]&\h^n(X_{\rm an},\C)/F^{n-1}\ar[r] &\cdots\ar[r]&\h^n(X_{\rm an},\C)/F^1\\
&\h^n(X_{\rm an},\Omega^{<n})\ar[u]\ar[r]& \h^n(X_{\rm an},\Omega^{<n-1})\ar[u]\ar[r]&\cdots\ar[r]&\h^n(X_{\rm an},\O)\ar[u]    }
\end{equation*}
\begin{exm}\label{exm:albcoho}
	Let $X$ be a proper scheme of dimension $d$ (over $\C$). Then $\h^{2d-1}(X)$ is a mixed Hodge structure satisfying $F^{d+1}=0$ and the sharp cohomology object $\h_\sharp^{2d-1,d}(X)$ is represented by
	\begin{equation*}
	\xymatrix{
	\h^{2d-1}(X)\ar[dr]\ar[r]&\h^{2d-1}(X)_\C\ar[r]^\id &\cdots \h^{2d-1}(X)_\C \ar[r]& \h^{2d-1}(X)_\C/F^{d} \cdots \\
	&V^{2d-1,k}_{n}(X) \ar[u] \ar[r]^\id &\cdots V^{2d-1,k}_{k+1}(X)\ar[u]\ar[r]&  \h^{2d-1,d}_{\rm dR}(X)\ar[u] \cdots   }
	\end{equation*}
	and $$F^{d+1}\h^{2d-1}(X)_\C\subset V^{2d-1,k}_{n}(X)=V^{2d-1,k}_{n-1}(X)=\cdots=V^{2d-1,k}_{k+1}(X)   $$
	Hence, according to Proposition \ref{prp:difflevels},  $\h_\sharp^{2d-1,d}(X)$ can be viewed as a formal Hodge structure of level $\le d$.
\end{exm}
\begin{prp}
	For any $n$ and $1\le p\le n$, the association $X\mapsto \h^{n,p}_{\sharp}(X)$ induces a contravariant functor from the category of proper complex algebraic schemes to  the category ${\sf FHS}_n$.
\end{prp}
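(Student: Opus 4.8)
The plan is to verify the three things that make $X \mapsto \h^{n,p}_\sharp(X)$ a well-defined contravariant functor: that the data assembled in Definition \ref{def:sharpcoho} genuinely constitutes an object of ${\sf FHS}_n$ for each proper $X$, that a morphism $f\colon X\to Y$ of proper $\C$-schemes induces a morphism $\h^{n,p}_\sharp(Y)\to \h^{n,p}_\sharp(X)$ in ${\sf FHS}_n$, and that this assignment respects identities and composition. The last point is essentially automatic once the second is done, since all the maps involved are induced by pullback on cohomology, which is itself functorial; so the work is in the first two points.

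First I would check the object axioms (i)--(iii) of Definition \ref{def:fhsn}. The \'etale component is $H_\et = \h^n(X_{\rm an},\Z)$ with its Deligne mixed Hodge structure; for $X$ proper this is of level $\le n$ (the Hodge numbers $h^{p,q}$ with $p+q$ the weight, all $\le n$ after the relevant normalization), so $F^{n+1}\h^n(X)_\C = 0$ and $F^0\h^n(X)_\C = \h^n(X)_\C$. The spaces $V^{n,p}_i(X)$ are finite-dimensional $\C$-vector spaces: for $1\le i\le p$ they are truncated analytic De Rham cohomology $\h^n(X_{\rm an},\Omega^{<i})$, finite-dimensional by GAGA (the $\Omega^j$ are coherent, $X$ proper) together with the hypercohomology spectral sequence; for $p < i \le n$ they are fiber products of such, hence still finite-dimensional. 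The horizontal maps $v_i$ come from the natural truncation maps $\sigma^{<i}{\cal A}^\bullet \to \sigma^{<i-1}{\cal A}^\bullet$ (and the universal property of the fiber product in the range $i>p$), and the vertical maps $\pi_i$ come from the comparison $\h^n(X_{\rm an},\Omega^{<i}) \to \h^n(X_{\rm an},\C)/F^i$ obtained from the hypercover splitting recalled in the commented-out motivating example; the diagonal augmentation $\h^n(X)_\Z \to V^{n,p}_n(X)$ lands through the cycle class / De Rham comparison. Commutativity of diagram (iii) reduces to commutativity of the square of truncation maps on ${\cal A}^\bullet$ and to the compatibility of the splitting maps with truncation, both of which are formal. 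I would also note $H^o = 0$ here, so the bottom-left corner is trivial and there are no conditions on $h^o$.

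For functoriality in $X$: a morphism $f\colon X\to Y$ induces $f^*$ on $\h^n(-,\Z)$, a morphism of mixed Hodge structures (Deligne), hence $\bar f_\C$ on each $\h^n(-)_\C/F^i$; and $f^*$ on the holomorphic De Rham complexes ${\cal A}^\bullet_Y \to f_*{\cal A}^\bullet_X$, compatible with the stupid truncation $\sigma^{<i}$, hence maps $\h^n(Y_{\rm an},\Omega^{<i}) \to \h^n(X_{\rm an},\Omega^{<i})$ for $1\le i\le p$. In the range $p < i\le n$ the induced map on the fiber products $V^{n,p}_i$ is forced by the universal property together with the two maps just described and the commutativity of $f^*$ with $\pi_i$. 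One then checks that the collection $(f^*, \{f^*\})$ satisfies the compatibility diagram in Definition \ref{def:fhsn} defining a morphism of $n$-formal Hodge structures; this again comes down to compatibility of $f^*$ with the hypercover splitting, which is the naturality built into Deligne's construction (the splitting depends functorially on $X$ because ${\cal A}^\bullet$ is a contravariant functor on all of ${\sf Sch}/\C$ and the smooth proper hypercover can be chosen functorially up to the maps that matter for cohomology). The main obstacle is exactly this last point --- pinning down in what precise sense the splitting $\h^n(X_{\rm an},\C) \to \h^n(X_{\rm an},{\cal A}^\bullet)$ and the resulting vertical maps $\pi_i$ are natural in $X$; the cleanest route is to invoke the functoriality of the analytic De Rham realization and of Deligne's mixed Hodge structure directly, so that the maps $\pi_i$ are realized as part of a morphism of mixed Hodge complexes (or at least of the relevant diagram of cohomologies) rather than reconstructed by hand for each $X$. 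Granting that, identities and composition are inherited from functoriality of $\h^n(-,\Z)$, of $\Omega^{<i}$-hypercohomology, and of fiber products, so $X \mapsto \h^{n,p}_\sharp(X)$ is a contravariant functor to ${\sf FHS}_n$.
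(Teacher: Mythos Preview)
Your proposal is correct and follows essentially the same strategy as the paper's proof: both rely on the functoriality of Deligne's mixed Hodge structure on $\h^n(X_{\rm an},\Z)$ together with the pullback on truncated De Rham hypercohomology $\h^n(-,\Omega^{<r})$, and then check compatibility.

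There are a couple of differences in emphasis worth noting. The paper begins by reducing to the case $p=n$, observing that this suffices (for $p<n$ the spaces $V^{n,p}_i$ with $i>p$ are fiber products, so functoriality there is forced by the universal property once you have it for $i\le p$ --- exactly the argument you sketch). The paper then spends its effort on a more explicit construction of the pullback map $\psi_r\colon \h^n(Y,\Omega^{<r})\to \h^n(X,\Omega^{<r})$: it factors this through $\h^n(X,f^{-1}\Omega_Y^{<r})$ and $\h^n(X,f^*\Omega_Y^{<r})$, building the first map by hand from injective resolutions and the lifting property, and the second from the canonical map $f^*\Omega_Y^\bullet\to\Omega_X^\bullet$ of K\"ahler differentials. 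You instead invoke the map of complexes ${\cal A}^\bullet_Y\to f_*{\cal A}^\bullet_X$ compatibly with $\sigma^{<i}$, which amounts to the same thing packaged differently. Conversely, you are more careful than the paper about verifying the object axioms (finite-dimensionality via GAGA, $H^o=0$) and about flagging the naturality of the $\pi_i$; the paper leaves both of these implicit.
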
 
\begin{proof} It is enough to prove the claim for $p=n$.  We know that $\h^n(X):=\h^{n}(X_{\rm an},\Z)$ along with its mixed Hodge structures is functorial in $X$, so for any $f:X\to Y$ we have $\h^{n}(f):\h^{n}(Y)\to \h^{n}(X)$. Also by the theory of K\"ahler differentials there exist a map of complexes of sheaves over $X$, $\phi_{\bullet}:f^{*}\Omega_{Y}^{\bullet}\to \Omega_{X}^{\bullet}$, inducing
$$
\alpha:\h^{n}(X,f^{*}\Omega_{Y}^{<r})\longrightarrow\h^{n}(X,\Omega_{X}^{<r})
$$
Moreover there exists $\beta: \h^{n}(Y,\Omega_{Y}^{<r})\to \h^{n}(X,f^{*}\Omega_{Y}^{<r})$. For it is sufficient to construct a map $\beta':\h^{n}(Y,\Omega_{Y}^{<r})\to \h^{n}(X,f^{-1}\Omega_{Y}^{<r})$. So let $I^{\bullet}$ (resp. $J^{\bullet}$) an injective resolution\footnote{By injective resolution of  a complex of sheaves $A^{\bullet}$ we mean a quasi isomorphism $A^{\bullet}\to I^{\bullet}$, where $I^{\bullet}$ is a complex of injective objects.} of $\Omega_{Y}^{<r}$ (resp. $f\inv \Omega_{Y}^{<r}$). Using that $f\inv$ preserves quasi-isomorphisms,  we have the commutative diagram
\\
\makebox[\textwidth][c]{
\xymatrix{
 f^{-1}\Omega_{Y}^{<r}\ar[d]^{quis}  \ar[r]^{quis}&J^{\bullet}   \\  
f\inv I^{\bullet}\ar[ur]_{\exists \gamma}
}}
where the existence of $\gamma$ follows from the fact that $J^{\bullet}$ is injective. So
we have defined a map $\psi_{r}:\h^{n}(Y,\Omega^{<r})\to\h^{n}(X,\Omega^{<r})$. \\
Now choosing $I^{\bullet}_{r},J^{\bullet}_{r}$ for any $r$ it's easy to see that the maps $\psi_{r}$ fit in the commutative diagram
\\
\makebox[\textwidth][c]{
\xymatrix{
\cdots \ar[r] &  \h^{n}(Y,\Omega^{<r})  \ar[d]^{\psi_{r}}\ar[r]&   \h^{n}(Y,\Omega^{<r-1})  \ar[d]^{\psi_{r-1}}\ar[r]& \cdots \\
\cdots\ar[r]   &\h^{n}(X,\Omega^{<r})  \ar[r]&   \h^{n}(X,\Omega^{r-1}) \ar[r]& \cdots
}} 
Now it is straightforward to check that $\h^{n,n}_{\sharp}(g\circ f)=\h_\sharp^{n,n}(f)\circ \h^{n,n}_\sharp(g)$, for any $f:X\to Y$, $g:Y\to Z$.
\end{proof}
\begin{exm}[No K\"unneth]
	 Let $X,Y$ be complete, connected, complex varieties. Then by K\"unneth formula follows
	$$
	\h^{1}((X\x Y)_{\rm an},?)=\h^{1}(X_{\rm an},?)\oplus \h^{1}(Y_{\rm an},?)\qquad ?=\Z,\ \O
	$$
	so that $\h^{1}_{\sharp}(X\x Y)=\h^{1}_{\sharp}(X)\oplus \h^{1}_{\sharp}(Y)$. But as soon as we move in degree 2 there is no hope for a good formula. With the same notation we get
	$$
	\h^{2}((X\x Y))_\Q= \h^{2}(X)_\Q\oplus \h^{1}(X)_\Q\ox \h^{1}(Y)_\Q\oplus \h^{2}(Y)_\Q
	$$
	which is the usual decomposition of singular cohomology. Let $p:X\x Y\to X$, $q:X\x Y\to Y$ the two projections; note that 
	$$
	\O_{X\x Y}\to \Omega_{X\x Y}^{1}=\sigma^{<2}\left( p^{*}(\O_{X}\to \Omega^{1}_{X})\ox q^{*}(\O_{Y}\to \Omega^{1}_{Y})\right)
	$$
	hence there is a canonical map
	$$
	\h^{2}(X\x Y, p^{*}(\Omega_{X}^{<2})\ox q^{*}(\Omega^{<2}_{Y}))=\oplus_{i=0}^{2}\h^{2-i,2}_{\rm dR}(X)\ox \h^{i,2}_{\rm dR}(Y)\rightarrow \h^{2,2}_{\rm dR}(X\x Y)
	$$
	which is not necessarily an isomorphism. From this follows that we cannot have a K\"unneth formula for $\h_\sharp^{2,2}(X\x Y)$.
\end{exm}
\subsection{The generalized Albanese of Esnault-Srinivas-Viehweg} Let $X$ be a proper and irreducible algebraic scheme of dimension $d$ over $\C$. Then there exists an algebraic group, say ${\rm ESV}(X)$, such that ${\rm ESV}(X)_{\rm an}=\h^{2d-1}(X,\Omega^{<d})/\h^{2d-1}(X_{\rm an},\Z)$ and it fits in the following commutative diagram with exact rows
\begin{equation*}
\xymatrix{
0\ar[r] & \Ker c \ar[d]^\rho \ar[r]& \frac{ \h^{2d-1}(X)_\C }{ \h^{2d-1}(X) } \ar[d]^\alpha\ar[r]^c & J^d(\h^{2d-1}(X))\ar[d]^\id\ar[r]&0\\
0\ar[r]& \Ker \theta\ar[r]& \frac{\h^{2d-1,d}_{\rm dR}(X)}{\h^{2d-1}(X) }\ar[r]^\theta& J^d(\h^{2d-1}(X))\ar[r]&0}
\end{equation*}
where $\alpha $ is induced by de canonical map of complexes of analytic sheaves $\C\to \Omega^{<d}$. (See \cite[Theorem 1, Lemma 3.1]{esnault.srinivas.viehweg})

Recall that the formal Hodge structure (of level $\le 2d-1$) $\h^{2d-1,d}_\sharp(X)$ can be viewed as a fhs of level $\le d$ (see \ref{exm:albcoho}) represented by the following diagram
\begin{equation*}
\xymatrix{
\h^{2d-1}(X)\ar[dr]^h\ar[r]& \h^{2d-1}(X)_\C/F^d\ar[r]&\cdots  \h^{2d-1}(X)_\C/F^1\\
 & \h^{2d-1,d}_{\rm dR}(X)\ar[u]\ar[r]&\cdots \h^{2d-1,1}_{\rm dR}(X)\ar[u] \ .}
\end{equation*}
\begin{prp}
	\label{prp:ext-esv} There is an isomorphism of complex connected Lie groups (not only of abelian groups!)
	\[
		{\rm ESV}(X)_{\rm an}\iso \Ext^1_{\sf FHS_{d}}(\Z(-d),\h^{2d-1,d}_\sharp(X))
	\]
	where $\Z(-d)$ is the Tate structure of type $(d,d)$ viewed as an \'etale formal Hodge structure.
\end{prp}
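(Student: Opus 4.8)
Here is a proof strategy. The idea is to realize both sides as the middle term of the \emph{same} extension of abelian groups and then promote the resulting abstract isomorphism to one of complex Lie groups.

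\textbf{Setup via two exact sequences.} By Example \ref{exm:albcoho} the object $\h^{2d-1,d}_\sharp(X)$, a priori of level $\le 2d-1$, is canonically a formal Hodge structure of level $\le d$ under the equivalence of Proposition \ref{prp:difflevels}; with this normalization its \'etale part is $H_\et=\h^{2d-1}(X)$ (of weights $\le 2d-1$, since $X$ is proper), its connected part vanishes, and $V_d=\h^{2d-1,d}_{\rm dR}(X)$, so $V_d^o=\Ker\bigl(\pi_d\colon \h^{2d-1,d}_{\rm dR}(X)\to \h^{2d-1}(X)_\C/F^d\bigr)$. Applying Proposition \ref{prp:genext} with $p=d$ (legitimate, as the level is $\le d\le p+1$ and the weights are $\le 2p-1$) gives
\[
0\to V_d^o\to \Ext^1_{{\sf FHS}_{d}}(\Z(-d),\h^{2d-1,d}_\sharp(X))\to J^d(\h^{2d-1}(X))\to 0 .
\]
On the other hand the definition of the formal $d$-Jacobian gives
\[
0\to V_d^o\to J^d_\sharp(\h^{2d-1,d}_\sharp(X))\to J^d(\h^{2d-1}(X))\to 0 ,
\]
and $J^d_\sharp(\h^{2d-1,d}_\sharp(X))=V_d/H_\et=\h^{2d-1,d}_{\rm dR}(X)/\h^{2d-1}(X)={\rm ESV}(X)_{\rm an}$ by the description of ${\rm ESV}(X)_{\rm an}$ recalled above; moreover this is precisely the bottom row of the Esnault--Srinivas--Viehweg diagram, with $\Ker\theta=V_d^o$.

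\textbf{The comparison map.} Unwinding the proof of Proposition \ref{prp:genext}: since the fhs $\Z(-d)$ has vanishing $V$-component in level $\le d$ and $\h^{2d-1,d}_\sharp(X)$ has $H^o=0$, an extension of $\Z(-d)$ by $\h^{2d-1,d}_\sharp(X)$ in ${\sf FHS}_d$ has underlying datum $\tilde H_\et=\h^{2d-1}(X)\oplus\Z(-d)$ with Hodge filtration twisted by some $\theta\in\h^{2d-1}(X)_\C$ (the class $\gamma$ of the extension in ${\sf MHS}$), $\tilde V_i=V_i$, and augmentation $\tilde h_\et(x,z)=h_\et(x)+z\,v$ for a vector $v\in V_d$ constrained by $\pi_d(v)=\theta\bmod F^d$; conversely such a $v$ determines the extension, and two vectors give equivalent extensions iff they differ by an element of $h_\et(\h^{2d-1}(X))$ (change of the splitting $\Z(-d)\to\tilde H_\et$). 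Hence there is a natural map $\Phi\colon V_d/H_\et\to \Ext^1_{{\sf FHS}_d}(\Z(-d),\h^{2d-1,d}_\sharp(X))$, $[v]\mapsto[(\tilde H,\tilde V)]$. One checks routinely that $\Phi$ is additive (Baer sum corresponds to addition of the vectors $v$) and compatible with the two exact sequences above --- it is the identity on $V_d^o$ and induces the identity on $J^d(\h^{2d-1}(X))$ --- so that the five lemma shows $\Phi$ is an isomorphism of abelian groups, canonically.

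\textbf{Upgrading to Lie groups, and the main obstacle.} The target carries a natural structure of complex Lie group, being (by Proposition \ref{prp:genext}) an extension of the complex Lie group $J^d(\h^{2d-1}(X))=\Ext^1_{\sf MHS}(\Z(-d),\h^{2d-1}(X))$ --- complex-analytic via the Carlson formula \eqref{eq:extmhs} --- by the $\C$-vector space $V_d^o$; the source ${\rm ESV}(X)_{\rm an}$ is the analytification of an algebraic group and is connected, being an extension of the connected group $J^d(\h^{2d-1}(X))$ by the vector group $V_d^o$. Since $\Phi$ is induced by the identity of $V_d$ and matches the two exact sequences term by term with their given analytic structures on the outer terms, it is holomorphic, hence an isomorphism of complex connected Lie groups. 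The delicate point is the middle step: extracting from the proof of Proposition \ref{prp:genext} a clean functorial description of an arbitrary extension class --- in particular checking that the augmentation $\tilde h_\et$ above is genuinely compatible with the truncated filtrations $\tilde H_\C/F^\bullet_\theta$ and with all the $\pi_i,v_i$ --- and matching it precisely with the Esnault--Srinivas--Viehweg sequence so that $\Phi$ is the identity on both outer terms; keeping track of the Tate twist and of the level shift from $2d-1$ to $d$ (Proposition \ref{prp:difflevels}) is where the bookkeeping is heaviest. Once that is settled, additivity, the five-lemma argument, and the holomorphy of $\Phi$ are formal.
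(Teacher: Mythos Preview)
Your proof is correct but takes a genuinely different route from the paper's. The paper proceeds in two steps: first it invokes Barbieri-Viale's result \cite{bv:fht} to identify ${\rm ESV}(X)_{\rm an}$, as a complex Lie group, with $\Ext^1_{{\sf FHS}_1(1)}(\Z(0),T_{\oint}({\rm ESV}(X)))$ (the analytic structure on the Ext side being established in that reference via the theory of generalized $1$-motives); then it twists and embeds $T_{\oint}({\rm ESV}(X))$ into $\h^{2d-1,d}_\sharp(X)$ as an object of ${\sf FHS}_d$, and shows that applying $\Ext^1_{{\sf FHS}_d}(\Z(-d),-)$ to this inclusion gives an isomorphism by comparing the two instances of the sequence \eqref{eq:genext}. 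Your approach is more direct and self-contained: you bypass the $1$-motive detour entirely, parametrize extensions of $\Z(-d)$ by $\h^{2d-1,d}_\sharp(X)$ explicitly by a vector $v\in V_d$ modulo $h_\et(H_\et)$, build the comparison map $\Phi$ by hand, and conclude with the five lemma; the Lie-group structure then drops out of the identification with $V_d/H_\et$. The paper's argument is shorter because it offloads the analytic-structure bookkeeping to \cite{bv:fht}, whereas yours makes the isomorphism completely explicit and does not rely on that external input --- at the cost of the compatibility checks you flag (which, as you note, are routine once one observes that $\tilde V_i=V_i$ forces all the $\pi_i,v_i$ to be unchanged and only the single constraint $\pi_d(v)\equiv\theta$ needs to be imposed).
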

\begin{proof} Step 1. By \cite{bv:fht} there is a canonical isomorphism of Lie groups
	\[
		{\rm ESV}_{\rm an}(X)\iso\Ext_{{}^t\mathcal{M}_1^{\rm a}}^1([\Z\to 0],[0\to {\rm ESV}(X)])\iso \Ext^1_{\sf FHS_1(1)}(\Z(0),T_{\oint}({\rm ESV}(X)))
	\]
	(recall that in \cite{bv:fht} ${\sf FHS}_1(1)$ is simply denote by ${\sf FHS}_1$; ${}^t\mathcal{M}_1^{\rm a}$ is the category of generalized 1-motives with torsion) where $T_{\oint}({\rm ESV}(X))$ is the formal Hodge structure represented by
	\begin{equation*}
	\xymatrix{
	\h^{2d-1}(X)(d)\ar[dr]\ar[r] &  	\h^{2d-1}(X)_\C(d)/F^0\\
	& \h^{2d-1,d}_{\rm dR}(X) \ar[u]}
	\end{equation*}

Step 2.	Up to a twist by $-d$ we can view $T_{\oint}({\rm ESV}(X))$  as an object of ${\sf FHS}_d$, say $(H_\et,V)$ with $H_\et=\h^{2d-1}(X)$, $V_d= \h^{2d-1,d}_{\rm dR}(X)$, $V_i=0$ for $1\le i< d$. It is easy to check that $\Ext^1_{{\sf FHS}_1(1)}(\Z(0),T_{\oint}({\rm ESV}(X)))=\Ext^1_{{\sf FHS}_d}(\Z(-d),(H_\et,V))$. Then applying $\Ext^1_{{\sf FHS}_d}(\Z(-d),-)$ to the canonical inclusion $(H_\et,V)\subset \h^{2d-1,d}_\sharp(X)$ we get a natural map
	\[
		 \Ext^1_{{\sf FHS}_1(1)}(\Z(0),T_{\oint}({\rm ESV}(X)))\rightarrow \Ext^1_{{\sf FHS}_{d}}(\Z(-d),\h^{2d-1,d}_\sharp(X))
	\]
which is an isomorphism by \eqref{eq:genext}.
\end{proof}
\subsection{The generalized Albanese of Faltings and W\"ustholz} Let $U$ be a smooth algebraic scheme over $\C$. Then it is possible to construct  a smooth compactification, i.e. $\exists$ $j:U\to X$ open embedding with $X$ proper and smooth. Moreover we can suppose that the complement $Y:=X\setminus U$ is a normal crossing divisor.\footnote{It is possible to replace $\C$ with a field $\k$ of characteristic zero. In that case we must assume that there exists a $\k$ rational point in order to have ${\rm FW}(Z)$ defined over $\k$.} 
\begin{rmk}
	\label{rmk:serrealb} There is a commutative diagram (See \cite[\S 3]{lekaus})
	\begin{equation*}
	\xymatrix{
	0\ar[r]&\h^0(X_{\rm an},\Omega^1(\log Y))\ar[d]^a\ar[r]& \h^1(U)_\C\ar[d]^\id\ar[r]&\h^{1,1}_{\rm dR}(X)\ar[d]^b \ar[r]&0\\
	0\ar[r]&\h^1(\Gamma(U_{an},\Omega^\bullet))\ar[r]& \h^1(U)_\C\ar[r]&\h^{1,1}_{\rm dR}(U)}
	\end{equation*}
	hence, by the snake lemma, $\Ker b\iso \Coker a$. We identify these two $\C$-vector spaces and we denote both by $K$. 

	For any $Z\subset K$ sub-vector space we define the $\C$-linear map $\alpha_Z:\h^1(X,\O)^*\to Z^*$ as the dual of the canonical inclusion $Z\subset \h^1(X,\O)$.	
\end{rmk}

\begin{dfn}[The generalized Albanese of Serre]
	 We know that 
	\[
		\h^1(U)(1)=T_{Hodge}([{\rm Div}_Y^0(X)\to \Pic^0(X)])
	\]
	and that the generalized Albanese of Serre is the Cartier dual of the above 1-motive, i.e.
	\[
		[0\to {\rm Ser}(U)]=[{\rm Div}_Y^0(X)\to \Pic^0(X)]\dual
	\]

	Note that by construction ${\rm Ser}(U)$ is a semi-abelian group scheme corresponding to the mixed Hodge structure $\h^1(U)(1)\dual:=\iHom_{\sf MHS}(\h^1(U)(1),\Z(1))$.

	 The universal vector extension of ${\rm Ser}(U)$ is
	\[
		0\to \underline{\omega}_{\Pic^0(X)}\to {\rm Ser}(U)^\natural\to {\rm Ser}(U)\to 0
	\]
	this follows by the construction of ${\rm Ser}(U)$ as the Cartier dual of $[{\rm Div}_Y^0(X)\to \Pic^0(X)]$ and \cite{bv.bertapelle:sharpderham} lemma 2.2.4. \\
	Recall that $\Lie(\Pic^0(X))= \h^1(X,\O)$, then $\underline{\omega}_{\Pic^0(X)}(\C)=\h^1(X,\O)^*$.
\end{dfn} 
\begin{dfn}[The gen. Albanese of Faltings and W\"ustholz]\label{FWconstruction}
	 We define an algebraic group ${\rm FW}(Z)$ (depending on $U$ and the choice of the vector space $Z$) to be the vector extension of ${\rm Ser}(U)$  by $Z^*$ defined by
	\[
	 \alpha_Z\in \Hom_\C(\h^1(X,\O)^*, Z^*)\iso\Hom_\C(\omega_{\Pic^0(X)},Z^*)\iso	\Ext^1({\rm Ser}(U),Z^*)
	\]
	i.e.  ${\rm FW}(Z)$ is the following push-forward
	\begin{equation*}
	\xymatrix{
	  0\ar[r]&\h^1(X,\O)^*\ar[d]^{\alpha_Z}\ar[r]&  {\rm Ser}(U)^\natural\ar[d]\ar[r]&{\rm Ser}(U)\ar[d]^\id\ar[r]&0\\
	0\ar[r]&Z^*\ar[r]& {\rm FW}(Z)\ar[r]& {\rm Ser}(U)\ar[r]&0}
	\end{equation*}
\end{dfn}
%\paragraph{Universal Property}
%
\begin{prp}
	 With the above notation consider the formal Hodge structure $(H_\et,V)\in \sf FHS_1$ represented by
	\begin{equation*}
	\xymatrix{
	\h^{1}(U)(1)\dual\ar[dr]^h\ar[r]& \h^{0}(X_{\rm an},\Omega^1(\log Y))^*\\
	& \h^{1}(\Gamma(U_{\rm an},\Omega^{\bullet}))^*\ar[u]^{a^*}}
	\end{equation*}
	(This diagram is the dual of the left square in remark \ref{rmk:serrealb}). Recall that $K=\Ker a$. Then
	\[
		{\rm FW}(K)_{\rm an}\iso \Ext^1_{{\sf FHS}_1}(\Z(-1),(H_\et,V))
	\]
\end{prp}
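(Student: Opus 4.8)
The strategy is to reduce the computation of $\Ext^1_{{\sf FHS}_1}(\Z(-1),(H_\et,V))$ to an explicit description using the extension formula \eqref{eq:genext} of Proposition \ref{prp:genext}, and then to match the resulting short exact sequence term-by-term with the defining push-out sequence of ${\rm FW}(K)$ from Definition \ref{FWconstruction}. First I would apply \eqref{eq:genext} with $p=1$ to the formal Hodge structure $(H_\et,V)$, where $H_\et=\h^1(U)(1)\dual$ has weights $\le -1$ (equivalently, after the twist, the relevant weight bound holds so that $J^1_\sharp$ is a complex Lie group). This yields a short exact sequence of complex Lie groups
\[
	0\to V_1^o\to \Ext^1_{{\sf FHS}_1}(\Z(-1),(H_\et,V))\to \Ext^1_{\sf MHS}(\Z(-1),H_\et)\to 0\ .
\]
Here $V_1 = \h^1(\Gamma(U_{\rm an},\Omega^\bullet))^*$, the augmentation is $a^*$ composed with $h$, and $V_1^o=\Ker\big(\h^1(\Gamma(U_{\rm an},\Omega^\bullet))^* \to \h^0(X_{\rm an},\Omega^1(\log Y))^*\big)$, which by dualizing the snake-lemma identification $\Ker b \iso \Coker a$ of Remark \ref{rmk:serrealb} is exactly $K^* = (\Coker a)^*$.

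**Matching the two sequences.** The next step is to identify $\Ext^1_{\sf MHS}(\Z(-1),H_\et)$ with ${\rm Ser}(U)_{\rm an}$. Since $H_\et = \h^1(U)(1)\dual$ is the mixed Hodge structure corresponding to the semi-abelian variety ${\rm Ser}(U)$ (by the very definition of ${\rm Ser}(U)$ as the Cartier dual of $[{\rm Div}^0_Y(X)\to\Pic^0(X)]$, whose Hodge realization is $\h^1(U)(1)$), Carlson's theorem gives $\Ext^1_{\sf MHS}(\Z(-1),H_\et)\iso {\rm Ser}(U)_{\rm an}$ canonically. Thus the sequence above reads
\[
	0\to K^*\to \Ext^1_{{\sf FHS}_1}(\Z(-1),(H_\et,V))\to {\rm Ser}(U)_{\rm an}\to 0\ .
\]
This has the same shape as the bottom row of the push-out diagram defining ${\rm FW}(K)$. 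To conclude I would verify that the two extension classes coincide, i.e. that the class of this sequence in $\Ext^1({\rm Ser}(U),K^*)$ equals the one defined by $\alpha_K\in\Hom_\C(\h^1(X,\O)^*,K^*)$. The point is that the boundary map in the ${\sf FHS}_1$-computation is governed by the augmentation $h\colon H^o\to V_1$ of the formal Hodge structure, which is precisely the dual $a^*$ of the map $a\colon\h^0(X_{\rm an},\Omega^1(\log Y))\to \h^1(\Gamma(U_{\rm an},\Omega^\bullet))$ in Remark \ref{rmk:serrealb}; and $a$ in turn is (by \cite{lekaus}) the map whose dual, restricted to $\Lie(\Pic^0(X))^\vee=\h^1(X,\O)^*$, is $\alpha_K$. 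So the universal vector extension ${\rm Ser}(U)^\natural$ with $\underline{\omega}_{\Pic^0(X)}(\C)=\h^1(X,\O)^*$ maps to our extension exactly via $\alpha_K$, and the push-out construction of ${\rm FW}(K)$ reproduces $\Ext^1_{{\sf FHS}_1}(\Z(-1),(H_\et,V))$.

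**Main obstacle.** The routine parts are the application of \eqref{eq:genext} and the Carlson identification of the étale piece. The delicate step is the compatibility of the two extension classes: one must check that the connecting homomorphism produced by \eqref{eq:genext} — built from the augmentation data $(h_\et,h^o)$ and the splitting $V_1 = V_1^o\oplus(\text{image in }H_\C/F^1)$ — agrees, under the identification $V_1^o = K^*$ and $\underline\omega_{\Pic^0(X)}(\C) = \h^1(X,\O)^*$, with the linear map $\alpha_K$ used in Definition \ref{FWconstruction}, rather than merely with some map differing by an automorphism. This amounts to chasing the diagram of Remark \ref{rmk:serrealb} and its dual carefully and invoking the explicit description of the universal vector extension of ${\rm Ser}(U)$ via \cite{bv.bertapelle:sharpderham}, Lemma 2.2.4. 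Once that identification of maps is pinned down, the isomorphism ${\rm FW}(K)_{\rm an}\iso\Ext^1_{{\sf FHS}_1}(\Z(-1),(H_\et,V))$ follows formally, and it is an isomorphism of complex Lie groups because both the $K^*$ and the ${\rm Ser}(U)_{\rm an}$ pieces carry their evident complex-analytic structures compatibly.
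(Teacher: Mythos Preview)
Your approach is the paper's: its entire proof reads ``It is a direct consequence of \ref{prp:genext}'', and you are unpacking precisely that, applying \eqref{eq:genext} with $p=1$ and identifying $V_1^o\iso K^*$ and $J^1(H_\et)\iso {\rm Ser}(U)_{\rm an}$. The ``main obstacle'' you flag---matching the extension class with $\alpha_K$---is not addressed in the paper at all, so your write-up is actually more complete than the original one-line proof.
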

\begin{proof}
	It is a direct consequence of \ref{prp:genext}.
\end{proof}
	\addcontentsline{toc}{section}{References}

\end{document}